\documentclass[twoside]{amsart}

\usepackage{amsmath,amsfonts,amsthm,mathrsfs}
\usepackage{amssymb}

\usepackage[ocgcolorlinks,unicode,bookmarks]{hyperref}
\usepackage[usenames,dvipsnames]{xcolor}
\hypersetup{colorlinks=true,citecolor=NavyBlue,linkcolor=BrickRed,urlcolor=Green} 



\usepackage{expl3}

\ExplSyntaxOn
\prop_new:N \g_cite_map_prop
\tl_new:N \l_citekey_result_tl

\cs_new:Npn \mapcitekey #1#2 {
  \clist_map_inline:nn {#2}
       {  \prop_gput:Nnn  \g_cite_map_prop  {##1} {#1}   }
}

\cs_new:Npn \getcitekey #1 {
   \prop_get:NoN \g_cite_map_prop{#1}  \l_citekey_result_tl
   \quark_if_no_value:NF \l_citekey_result_tl
       {  \tl_set_eq:NN #1  \l_citekey_result_tl  }
}

\cs_new:Npn \showcitekeymaps {\prop_show:N  \g_cite_map_prop }
\ExplSyntaxOff

\usepackage{etoolbox}
\makeatletter
\patchcmd{\@citex}{\if@filesw}{\getcitekey\@citeb \if@filesw}%
    {\typeout{*** SUCCESS ***}}{\typeout{*** FAIL ***}}
\patchcmd{\nocite}{\if@filesw}{\getcitekey\@citeb \if@filesw}%
    {\typeout{*** SUCCESS ***}}{\typeout{*** FAIL ***}}
\makeatother


\usepackage{enumitem}

\newenvironment{aenumerate}{%
	\begin{enumerate}[label=(\alph{*}), ref=(\alph{*})]
}{%
	\end{enumerate}%
}

\usepackage{chngcntr}

\usepackage{tikz}

\usepackage{tikz-cd}
\tikzset{commutative diagrams/arrow style=math font}


\newcommand{\Dmod}{\mathscr{D}}
\newcommand{\Mmod}{\mathcal{M}}

\newcommand{\shT}{\mathscr{T}}

\newcommand{\derR}{\mathbf{R}}

\newcommand{\decal}[1]{\lbrack #1 \rbrack}


\newcommand{\tensor}{\otimes}


\newcommand{\shHom}{\mathcal{H}\hspace{-1pt}\mathit{om}}


\newcommand{\ZZ}{\mathbb{Z}}

\newcommand{\CC}{\mathbb{C}}

\newcommand{\PP}{\mathbb{P}}


\newcommand{\menge}[2]{\bigl\{ \thinspace #1 \thinspace\thinspace \big\vert%
\thinspace\thinspace #2 \thinspace \bigr\}}


\DeclareMathOperator{\Res}{Res}

\DeclareMathOperator{\id}{id}

\DeclareMathOperator{\gr}{gr}
\DeclareMathOperator{\DR}{DR}

\DeclareMathOperator{\Ch}{Ch}

\newcommand{\define}[1]{\emph{#1}}


\newcommand{\lie}[2]{\lbrack #1, #2 \rbrack}


\newcommand{\shf}[1]{\mathscr{#1}}
\newcommand{\OX}{\shf{O}_X}
\newcommand{\OmX}{\Omega_X}


\newcommand{\restr}[1]{\big\vert_{#1}}


\def\overbar#1#2#3{{%
	\setbox0=\hbox{\displaystyle{#1}}%
	\dimen0=\wd0
	\advance\dimen0 by -#2 
	\vbox {\nointerlineskip \moveright #3 \vbox{\hrule height 0.3pt width \dimen0}%
		\nointerlineskip \vskip 1.5pt \box0}%
}}


\newcommand{\into}{\hookrightarrow}


\newcommand{\class}[1]{\lbrack #1 \rbrack}

\newcommand{\pil}{\pi_{\ast}}
\newcommand{\jl}{j_{\ast}}

\newcommand{\ju}{j^{\ast}}
\newcommand{\fu}{f^{\ast}}
\newcommand{\fl}{f_{\ast}}

\newcommand{\pu}{p^{\ast}}
\newcommand{\pl}{p_{\ast}}

\newcommand{\tl}{t_{\ast}}

\newcommand{\piu}{\pi^{\ast}}

\newcommand{\DD}{\mathbb{D}}

\newcommand{\shO}{\shf{O}}

\makeatletter
\let\@@seccntformat\@seccntformat
\renewcommand*{\@seccntformat}[1]{%
  \expandafter\ifx\csname @seccntformat@#1\endcsname\relax
    \expandafter\@@seccntformat
  \else
    \expandafter
      \csname @seccntformat@#1\expandafter\endcsname
  \fi
    {#1}%
}
\newcommand*{\@seccntformat@subsection}[1]{%
  \textbf{\csname the#1\endcsname.}
}
\makeatother

\makeatletter
\let\@paragraph\paragraph
\renewcommand*{\paragraph}[1]{%
	\vspace{0.3\baselineskip}%
	\@paragraph{\textit{#1}}%
}
\makeatother

\counterwithin{equation}{subsection}
\counterwithout{subsection}{section}
\counterwithin{figure}{subsection}

\newtheorem{theorem}[equation]{Theorem}
\newtheorem*{theorem*}{Theorem}
\newtheorem{lemma}[equation]{Lemma}
\newtheorem*{lemma*}{Lemma}

\newtheorem{proposition}[equation]{Proposition}
\newtheorem*{proposition*}{Proposition}

\newtheorem*{conjecture*}{Conjecture}

\theoremstyle{definition}
\newtheorem{definition}[equation]{Definition}
\newtheorem*{definition*}{Definition}
\theoremstyle{remark}

\newtheorem{example}[equation]{Example}
\newtheorem*{example*}{Example}
\newtheorem*{problem*}{Problem}
\newtheorem*{note}{Note}

\theoremstyle{plain}

\newcommand{\theoremref}[1]{\hyperref[#1]{Theorem~\ref*{#1}}}
\newcommand{\lemmaref}[1]{\hyperref[#1]{Lemma~\ref*{#1}}}
\newcommand{\definitionref}[1]{\hyperref[#1]{Definition~\ref*{#1}}}
\newcommand{\propositionref}[1]{\hyperref[#1]{Proposition~\ref*{#1}}}
\newcommand{\conjectureref}[1]{\hyperref[#1]{Conjecture~\ref*{#1}}}
\newcommand{\corollaryref}[1]{\hyperref[#1]{Corollary~\ref*{#1}}}
\newcommand{\exampleref}[1]{\hyperref[#1]{Example~\ref*{#1}}}
\newcommand{\exerciseref}[1]{\hyperref[#1]{Exercise~\ref*{#1}}}

\makeatletter
\let\old@caption\caption
\renewcommand*{\caption}[1]{%
	\setcounter{figure}{\value{equation}}%
	\stepcounter{equation}%
	\old@caption{#1}\relax%
}
\makeatother

\newcounter{intro}

\newtheorem{intro-conjecture}[intro]{Conjecture}
\newtheorem{intro-corollary}[intro]{Corollary}
\newtheorem{intro-theorem}[intro]{Theorem}


\newcommand{\OmY}{\Omega_Y}
\newcommand{\OY}{\shO_Y}

\newcommand{\df}{\mathit{df}}


\newcommand{\parref}[1]{\hyperref[#1]{\S\ref*{#1}}}
\newcommand{\chapref}[1]{\hyperref[#1]{Chapter~\ref*{#1}}}

\makeatletter
\newcommand*\if@single[3]{%
  \setbox0\hbox{${\mathaccent"0362{#1}}^H$}%
  \setbox2\hbox{${\mathaccent"0362{\kern0pt#1}}^H$}%
  \ifdim\ht0=\ht2 #3\else #2\fi
  }
\newcommand*\rel@kern[1]{\kern#1\dimexpr\macc@kerna}
\newcommand*\widebar[1]{\@ifnextchar^{{\wide@bar{#1}{0}}}{\wide@bar{#1}{1}}}
\newcommand*\wide@bar[2]{\if@single{#1}{\wide@bar@{#1}{#2}{1}}{\wide@bar@{#1}{#2}{2}}}
\newcommand*\wide@bar@[3]{%
  \begingroup
  \def\mathaccent##1##2{%
    \if#32 \let\macc@nucleus\first@char \fi
    \setbox\z@\hbox{$\macc@style{\macc@nucleus}_{}$}%
    \setbox\tw@\hbox{$\macc@style{\macc@nucleus}{}_{}$}%
    \dimen@\wd\tw@
    \advance\dimen@-\wd\z@
    \divide\dimen@ 3
    \@tempdima\wd\tw@
    \advance\@tempdima-\scriptspace
    \divide\@tempdima 10
    \advance\dimen@-\@tempdima
    \ifdim\dimen@>\z@ \dimen@0pt\fi
    \rel@kern{0.6}\kern-\dimen@
    \if#31
      \overline{\rel@kern{-0.6}\kern\dimen@\macc@nucleus\rel@kern{0.4}\kern\dimen@}%
      \advance\dimen@0.4\dimexpr\macc@kerna
      \let\final@kern#2%
      \ifdim\dimen@<\z@ \let\final@kern1\fi
      \if\final@kern1 \kern-\dimen@\fi
    \else
      \overline{\rel@kern{-0.6}\kern\dimen@#1}%
    \fi
  }%
  \macc@depth\@ne
  \let\math@bgroup\@empty \let\math@egroup\macc@set@skewchar
  \mathsurround\z@ \frozen@everymath{\mathgroup\macc@group\relax}%
  \macc@set@skewchar\relax
  \let\mathaccentV\macc@nested@a
  \if#31
    \macc@nested@a\relax111{#1}%
  \else
    \def\gobble@till@marker##1\endmarker{}%
    \futurelet\first@char\gobble@till@marker#1\endmarker
    \ifcat\noexpand\first@char A\else
      \def\first@char{}%
    \fi
    \macc@nested@a\relax111{\first@char}%
  \fi
  \endgroup
}
\makeatother

\mapcitekey{Saito:HodgeModules}{Saito-HM}
\mapcitekey{Saito:MixedHodgeModules}{Saito-MHM}
\mapcitekey{Durfee:Neighborhoods}{Durfee}
\mapcitekey{Ramanujam:KodairaVanishing}{Ramanujam}
\mapcitekey{Esnault+Viehweg:VanishingTheorems}{EV}
\mapcitekey{Esnault+Viehweg:LogarithmicDeRham}{EV-log}
\mapcitekey{Saito:DModules}{Saito-an}
\mapcitekey{Kodaira:DifferentialGeometricMethod}{Kodaira}
\mapcitekey{Deligne+Illusie}{DI}
\mapcitekey{Saito:Theory}{Saito-th}
\mapcitekey{Saito:Kollar}{Saito-K}
\mapcitekey{Popa:SaitoVanishing}{Popa}
\mapcitekey{Schnell:sanya}{sanya}

\newcommand{\HM}[2]{\operatorname{HM}(#1, #2)}

\newcommand{\HMp}[2]{\operatorname{HM}(#1, #2)}
\newcommand{\HMZp}[3]{\operatorname{HM}_{#1}(#2, #3)}
\newcommand{\MHMp}[1]{\operatorname{MHM}(#1)}
\newcommand{\MHMps}[2]{\operatorname{MHM}(#1, #2)}
\newcommand{\MHMpS}[2]{\operatorname{MHM} \bigl( #1, #2 \bigr)}

\newcommand{\Dbcoh}{\mathrm{D}_{\mathit{coh}}^{\mathit{b}}}
\newcommand{\OP}{\shO_P}

\newcommand{\OXst}{\OX^{\times}}
\newcommand{\OZst}{\OZ^{\times}}
\newcommand{\OZ}{\shO_Z}

\newcommand{\OmD}{\Omega_D}
\newcommand{\Kt}{\tilde{K}}
\newcommand{\deltat}{\tilde{\delta}}

\newcommand{\shI}{\mathscr{I}}
\newcommand{\omX}{\omega_X}
\renewcommand{\DD}{\mathbf{D}}
\newcommand{\dx}{\mathit{dx}}
\newcommand{\shK}{\mathcal{K}}


\begin{document}

\title{On Saito's vanishing theorem}
\author{Christian Schnell}
\address{%
Department of Mathematics\\
Stony Brook University\\
Stony Brook, NY 11794-3651}
\email{cschnell@math.sunysb.edu}

\begin{abstract}
We reprove Saito's vanishing theorem for mixed Hodge modules by the method of Esnault
and Viehweg. The main idea is to exploit the strictness of direct images on certain
branched coverings.
\end{abstract}
\date{\today}
\maketitle

\section{Overview}

\subsection{Introduction}

The Kodaira vanishing theorem is one of the most useful results in algebraic
geometry. Besides the original differential-geometric proof by Kodaira \cite{Kodaira}
and the famous algebraic proof by Deligne and Illusie \cite{DI}, there are at
least two other proofs that are based on Hodge theory. One is due to
Ramanujam \cite{Ramanujam}, and uses the weak Lefschetz theorem; the other is due to
Esnault and Viehweg \cite{EV-log}, and uses branched coverings and the degeneration
of the Hodge-de Rham spectral sequence.

Saito's vanishing theorem \cite[\S2.g]{Saito-MHM} is a generalization of Kodaira's theorem to
mixed Hodge modules; it contains as special cases several other results, such as
Koll\'ar's vanishing theorem for higher direct images of dualizing sheaves. More
precisely, Saito uses Artin's vanishing theorem for perverse sheaves on affine
varieties to obtain a vanishing theorem for the graded quotients of the de Rham
complex of any graded-polarizable mixed Hodge module; his proof is therefore a
distant cousin of Ramanujam's. In this paper, we show that Saito's theorem can
also be proved by the method of Esnault and Viehweg: the key point is to exploit the
strictness of direct images on certain branched
coverings. The argument is perhaps less elegant than Saito's, but it has
two advantages: 
\begin{enumerate} 
\item The vanishing theorem follows from results about polarizable Hodge
modules, without appealing to vanishing theorems for perverse sheaves.
\item The result can be stated and proved entirely in terms of pure Hodge
modules, without the need for using mixed Hodge modules.
\end{enumerate}
Since mixed Hodge modules are more complicated than pure ones, the second point may
be useful to someone who is trying to understand Saito's vanishing theorem
with a minimum of theoretical background. Those who are interested in the original
proof can also consult Popa's expository paper \cite{Popa}.

\subsection{Statement of the result}

We will first state the vanishing theorem for mixed Hodge modules, because this is the
version that Saito gives; but in fact, the general case follows very easily from
the special case of pure Hodge modules.

Let $Z$ be a reduced projective algebraic variety. We denote by $\MHMp{Z}$ the
abelian category of graded-polarizable mixed Hodge modules on $Z$. It is defined by
embedding $Z$ into a complex manifold $X$, for example into complex projective space,
and then looking at all graded-polarizable mixed Hodge modules on $X$ whose support
is contained in $Z$. One can show that this definition is independent of the choice of
embedding; for the convenience of the reader, an outline of the proof is included in
\parref{par:MHM} below. Given $M \in \MHMp{Z}$, we write $(\Mmod, F_{\bullet} \Mmod)$
for the underlying filtered $\Dmod$-module: $\Mmod$ is a regular holonomic left
$\Dmod$-module on $X$ whose support is contained in $Z$, and $F_{\bullet} \Mmod$ is a
good filtration by coherent $\OX$-modules. We set $n = \dim X$, and denote by
\[
	\DR(\Mmod) = \Bigl\lbrack
		\Mmod \to \OmX^1 \tensor \Mmod \to \dotsb \to \OmX^n \tensor \Mmod
	\Bigr\rbrack \decal{n}
\]
the de Rham complex of $\Mmod$; by a theorem of Kashiwara, it is a
perverse sheaf on $X$ with support in $Z$. The complex $\DR(\Mmod)$ is naturally
filtered by the family of subcomplexes
\[
	F_p \DR(\Mmod) = \Bigl\lbrack
		F_p \Mmod \to \OmX^1 \tensor F_{p+1} \Mmod \to \dotsb 
			\to \OmX^n \tensor F_{p+n} \Mmod
	\Bigr\rbrack \decal{n},
\]
and one can use the properties of mixed Hodge modules to show that each
\[
	\gr_p^F \DR(\Mmod) = \Bigl\lbrack
		\gr_p^F \Mmod \to \OmX^1 \tensor \gr_{p+1}^F \Mmod \to \dotsb 
			\to \OmX^n \tensor \gr_{p+n}^F \Mmod
	\Bigr\rbrack \decal{n}
\]
is a well-defined complex of coherent $\OZ$-modules, whose isomorphism class in the
derived category $\Dbcoh(\OZ)$ does not depend on the choice of ambient complex
manifold $X$. For the convenience of the reader, the argument is recalled in
\lemmaref{lem:subquotient}.

As one of the first applications of his theory of mixed Hodge modules, Saito
proved the following vanishing theorem for those complexes \cite[\S2.g]{Saito-MHM}.

\begin{theorem} \label{thm:Saito}
Let $M \in \MHMp{Z}$ be a graded-polarizable mixed Hodge module on a reduced
projective variety $Z$. If $L$ is an ample line bundle on $Z$, one has
\begin{align*}
	H^i \bigl( Z, \gr_p^F \DR(\Mmod) \tensor L \bigr) = 0 
		\quad \text{for $i > 0$ and $p \in \ZZ$,} \\
	H^i \bigl( Z, \gr_p^F \DR(\Mmod) \tensor L^{-1} \bigr) = 0 
		\quad \text{for $i < 0$ and $p \in \ZZ$.} 
\end{align*}
\end{theorem}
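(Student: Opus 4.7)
The approach is to adapt the cyclic-cover proof of Kodaira vanishing by Esnault--Viehweg to the Hodge-module setting. The first step is to reduce from mixed to pure. Given a mixed $M \in \MHMp{Z}$, the weight filtration $W_\bullet M$ has polarizable pure graded quotients $\gr_k^W M$, and because $F$ is strict with respect to $W$, the short exact sequences $0 \to W_{k-1} M \to W_k M \to \gr_k^W M \to 0$ produce distinguished triangles in $\Dbcoh(\OZ)$ after applying $\gr_p^F \DR$. Induction on the length of the weight filtration reduces the vanishing statement to the case of a polarizable pure Hodge module, and from now on I assume $M$ is pure.

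To absorb the twist by $L^{-1}$, I would set up a cyclic cover. After suitably re-embedding $Z$ into an ambient smooth projective variety $X$, I may arrange that $L$ extends to an ample line bundle on $X$, still denoted $L$. Picking $m \geq 1$ and a general section $s \in H^0(X, L^m)$ whose vanishing locus $D$ is smooth and transverse to the strata of $M$, I let $\pi \colon Y \to X$ be the associated cyclic cover of degree $m$. Then $Y$ is smooth, $\pi$ is finite and flat, and
\[
\pi_\ast \OY \cong \bigoplus_{i=0}^{m-1} L^{-i}.
\]
Let $M_Y$ denote the pull-back of $M$ to $Y$; transversality of $D$ to the strata of $M$ guarantees that $M_Y$ underlies a polarizable pure Hodge module on $Y$, and the $\mu_m$-equivariance of $\pi$ induces an eigenspace decomposition
\[
\pi_+ M_Y \cong \bigoplus_{i=0}^{m-1} M \tensor L^{-i}
\]
as filtered $\Dmod$-modules on $X$, up to bookkeeping of the filtration shifts.

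The crucial input is the \emph{strictness of the filtered direct image} $\pi_+$ for polarizable pure Hodge modules under the proper map $\pi$. Because $\pi$ is finite, strictness specializes to the assertion that $\pi_\ast$ commutes with $\gr_p^F \DR$, and combining with the decomposition gives
\[
\pi_\ast \gr_p^F \DR(M_Y) \cong \bigoplus_{i=0}^{m-1} \gr_p^F \DR(M) \tensor L^{-i}
\]
in $\Dbcoh(\OX)$. By the projection formula, cohomological vanishing for $\gr_p^F \DR(M_Y)$ on $Y$ produces simultaneous vanishing on $X$ for all the twists $L^{-i}$ with $0 \leq i \leq m-1$, and extracting the summand $i = 1$ yields the negative half of the theorem. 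For $m$ sufficiently large, the required vanishing on $Y$ can be obtained by combining Saito's Hodge-de Rham degeneration on $Y$ with Serre-type vanishing, possibly after a further auxiliary cover; the positive half of the theorem then follows from the negative half by Grothendieck-Serre duality, since $\gr_p^F \DR$ intertwines the Hodge-module duality $\DD$ with the duality on $\Dbcoh(\OZ)$.

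The main technical obstacle, as I anticipate it, is the rigorous identification of $\pi_+ M_Y$ as a filtered direct sum with the correct shifts of $F_\bullet$, together with the transversality discussion needed to ensure that $M_Y$ is a polarizable pure Hodge module whose Hodge filtration pulls back in the expected way. Once these pieces are assembled, strictness of $\pi_+$ supplies the genuine Hodge-theoretic input, replacing the Hodge-de Rham degeneration on the cover that appears in the classical Esnault--Viehweg argument for Kodaira.
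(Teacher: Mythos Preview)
Your reduction to the pure case and your use of the cyclic cover are in line with the paper, but the heart of your argument rests on a decomposition that does not hold. The object $M \tensor L^{-i}$ is not a filtered $\Dmod$-module to begin with (there is no canonical flat connection on $L^{-i}$), so the formula $\pi_{+} M_Y \simeq \bigoplus_i M \tensor L^{-i}$ cannot be correct as written. What is true is that the $\mu_m$-action decomposes the pushed-forward \emph{de Rham complex}, but for the nontrivial characters the summands are built out of logarithmic forms along the branch divisor $D$: the $L^{-1}$-eigenspace of $\pil \DR(\Mmod_Y)$ is
\[
\Bigl[\, L^{-1}\tensor \Mmod \to \OmX^1(\log D)\tensor L^{-1}\tensor \Mmod \to \dotsb \to \OmX^n(\log D)\tensor L^{-1}\tensor \Mmod \,\Bigr]\decal{n},
\]
with a differential that differs from the one in $L^{-1}\tensor \DR(\Mmod)$ by an extra $\tfrac{1}{m}\tfrac{ds}{s}\wedge(-)$ term. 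Consequently your key formula $\pil \gr_p^F \DR(M_Y) \simeq \bigoplus_i \gr_p^F \DR(M)\tensor L^{-i}$ fails, and you cannot simply ``extract the summand $i=1$'' to get the twist you want.

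Bridging the gap between this logarithmic complex and $L^{-1}\tensor \gr_p^F \DR(\Mmod)$ is precisely where the work lies. The paper does this by comparing the two via the natural inclusion $\OmX^k \hookrightarrow \OmX^k(\log D)$ and the residue map to $D$; a short computation shows that the restriction morphism in cohomology to $D$ factors through the $E_1$-differential of the spectral sequence on $Y$, which vanishes by strictness. One then finishes by induction on $\dim Z$ (applying the theorem to $M_D$) together with Serre vanishing for $N\gg 0$. Note also that your proposed endgame---``the required vanishing on $Y$ can be obtained by combining Saito's Hodge--de Rham degeneration on $Y$ with Serre-type vanishing, possibly after a further auxiliary cover''---is the very statement you are trying to prove, transported to $Y$; degeneration alone gives vanishing of differentials, not of cohomology groups, and it is the residue/induction mechanism that converts the former into the latter.
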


Note that Kodaira's vanishing theorem is a special case: the pair
$(\OX, F_{\bullet} \OX)$, with $\gr_p^F \OX = 0$ for $p \neq 0$, is part of a
polarizable Hodge module, and one has
\[
	\omX = \gr_{-n}^F \DR(\OX) \quad \text{and} \quad
		\OX \decal{n} = \gr_0^F \DR(\OX).
\]
Although \theoremref{thm:Saito} is stated in terms of mixed Hodge modules, it is
really a result about pure ones; we will see below that the same is true for
the proof.  

\subsection{Reduction to the pure case}

We now explain how to obtain \theoremref{thm:Saito} from a statement about pure Hodge
modules. For a reduced and irreducible projective variety $Z$, we denote by
$\HMZp{Z}{Z}{w}$ the abelian category of polarizable Hodge modules of weight $w$ with
strict support $Z$; the precise definition is
\[
	\HMZp{Z}{Z}{w} = \HMZp{Z}{X}{w},
\]
where $X$ is a complex manifold containing $Z$, and where $M \in \HMp{X}{w}$ belongs
to $\HMZp{Z}{X}{w}$ iff the support of every nonzero subobject or quotient object of
$M$ is equal to $Z$. As before, one can show that this category does not depend on the
choice of embedding; in fact, an important result by Saito
\cite[Theorem~3.21]{Saito-MHM} says that $\HMZp{Z}{Z}{w}$ is equivalent to the
category of generically defined polarizable variations of Hodge
structure of weight $w - \dim Z$.

\begin{theorem} \label{thm:Saito-pure}
Let $Z$ be a reduced and irreducible projective variety, and let $M \in \HMZp{Z}{Z}{w}$
be a polarizable Hodge module with strict support $Z$. Then one has
\begin{align*}
	H^i \bigl( Z, \gr_p^F \DR(\Mmod) \tensor L \bigr) = 0 
		\quad \text{for $i > 0$ and $p \in \ZZ$,} \\
	H^i \bigl( Z, \gr_p^F \DR(\Mmod) \tensor L^{-1} \bigr) = 0 
		\quad \text{for $i < 0$ and $p \in \ZZ$,} 
\end{align*}
where $L$ is any ample line bundle on $Z$.
\end{theorem}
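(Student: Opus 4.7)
The plan is to adapt the Esnault--Viehweg cyclic-covering proof of Kodaira vanishing to the Hodge-module setting, with Saito's strictness theorem for direct images of polarizable Hodge modules under projective morphisms standing in for the ordinary Hodge-to-de Rham degeneration that drives the classical argument. I would first embed $Z$ in a smooth projective manifold $X$ and let $(\Mmod, F_{\bullet} \Mmod)$ be the underlying filtered $\Dmod$-module. By the structure theorem recalled just before the statement, $M$ is the intermediate extension of a polarizable variation of Hodge structure on a dense open subset $U \subset Z$, and after a log resolution one may arrange that $Z \setminus U$ is contained in a simple normal crossings divisor on $X$.

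Next I would choose $k \gg 0$ so that $L^k$ is very ample, extend it to $X$, and pick a general smooth $D \in |L^k|$ meeting all strata of that snc divisor transversely. Form the cyclic $k$-fold cover $\pi \colon X' \to X$ ramified along $D$ and blow up to make $X'$ smooth with $\pi^{-1}(D)$ snc. On $X'$ I would construct a polarizable Hodge module $M'$ as the intermediate extension of the generic pull-back of $M$; its natural $\ZZ/k$-equivariant structure should refine the classical identity $\pil \shf{O}_{X'} \cong \bigoplus_{i=0}^{k-1} L^{-i}$ into a Hodge-theoretic splitting
\[
\pil \gr_p^F \DR(\Mmod') \;\cong\; \bigoplus_{i=0}^{k-1} \gr_p^F \DR(\Mmod) \tensor L^{-i}
\]
(up to log-completion terms along $D$ that must be tracked carefully). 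Applying Saito's strictness theorem to the projective morphism $X' \to \pt$ for $(M', F_{\bullet} \Mmod')$, and combining it with the affineness of $X' \setminus \pi^{-1}(D)$ (which holds because $D$ is ample on $X'$), should yield the Akizuki--Kodaira--Nakano-type vanishing
\[
H^i \bigl( X', \gr_p^F \DR(\Mmod') \bigr) = 0 \qquad (i < 0, \ p \in \ZZ).
\]
Decomposing under the $\ZZ/k$-action and extracting the $i = 1$ isotypic component then reproduces the second vanishing of the theorem, namely $H^i(Z, \gr_p^F \DR(\Mmod) \tensor L^{-1}) = 0$ for $i < 0$; Saito's duality for pure Hodge modules, which exchanges $\gr_p^F \DR(\Mmod)$ with $\gr_{-p}^F \DR$ of the dual Hodge module, combined with Serre duality on $Z$, converts this into the first vanishing with $L$.

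The main obstacle will be the correct construction of $M'$ on the cover so that its Hodge filtration is compatible both with the $\ZZ/k$-action and with the logarithmic structure along $\pi^{-1}(D)$, and so that the graded de Rham pieces really split cleanly into the claimed eigenspaces on $X$. A second subtlety is that the paper sets out to avoid perverse-sheaf Artin vanishing on affine varieties; bridging from ``coherent vanishing on the affine open $X' \setminus \pi^{-1}(D)$'' to ``vanishing of $H^i(X', \gr_p^F \DR(\Mmod'))$'' will therefore have to be carried out using only strictness and standard properties of polarizable Hodge modules. Once those two points are established, the remainder of the argument is essentially formal.
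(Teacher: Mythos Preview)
Your overall plan—branched cover plus Saito's strictness theorem, then eigenspace decomposition and duality—matches the paper's philosophy, but the key step you isolate does not work as written and is not what the paper does.

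The gap is your step claiming that strictness of the direct image of $M'$ to a point, together with the affineness of $X'\setminus\pi^{-1}(D)$, yields
\[
   H^i\bigl(X',\gr_p^F\DR(\Mmod')\bigr)=0 \qquad (i<0).
\]
This is false already for $M'=\QQ_{X'}^H[n]$: with $\Mmod'=\shf{O}_{X'}$ one has $\gr_0^F\DR(\shf{O}_{X'})=\shf{O}_{X'}\decal{n}$ and $H^{-n}(X',\shf{O}_{X'}\decal{n})=H^0(X',\shf{O}_{X'})\neq 0$. Strictness only gives $E_1$-degeneration of the Hodge-to-de Rham spectral sequence for $M'$, not vanishing of its pieces; to convert affineness of the complement into the vanishing you want, you would have to invoke Artin vanishing for the perverse sheaf $\DR(\Mmod')$ on an affine open—precisely the ingredient the paper is built to avoid. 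So the bridge you describe in your second-to-last paragraph cannot be crossed with the tools you allow yourself.

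What the paper actually extracts from strictness is much weaker: only that the $E_1$-differentials
\[
   H^i\bigl(Y,\gr_p^F\DR(\Mmod_Y)\bigr)\to H^{i+1}\bigl(Y,\gr_{p-1}^F\DR(\Mmod_Y)\bigr)
\]
vanish. One then takes the $L^{-1}$-eigenspace of $\pil\DR(\Mmod_Y)$, which is a \emph{logarithmic} de Rham complex, not simply $\gr_p^F\DR(\Mmod)\tensor L^{-1}$ as in your proposed splitting; the residue along $D$ enters essentially. A local computation shows that the resulting connecting morphism factors the restriction map $H^i\bigl(Z,L^{-1}\tensor\gr_p^F\DR(\Mmod)\bigr)\to H^{i+1}\bigl(D\cap Z,L_D^{-1}\tensor\gr_p^F\DR(\Mmod_D)\bigr)$, which is therefore zero. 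Combined with \emph{induction on $\dim Z$} (applied to $M_D$) this forces multiplication by $s\in H^0(Z,L^N)$ to be surjective on $H^i\bigl(Z,L^{-1}\tensor\gr_p^F\DR(\Mmod)\bigr)$ for $i<0$; Serre vanishing for $N\gg 0$ then kills the source. Two further technical points you have glossed over are also handled differently in the paper: $L$ is extended only to an open neighbourhood of $Z$ in projective space (not to a smooth projective $X$), and $M_Y$ is constructed as the \emph{non-characteristic inverse image} of $M$, which immediately gives $F_{\bullet}\Mmod_Y=\piu F_{\bullet}\Mmod$ without any appeal to intermediate extensions or log resolutions.
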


It is easy to deduce \theoremref{thm:Saito} from this special case. Suppose first
that $Z$ is a reduced projective variety, and that $M \in \HMp{Z}{w}$ is a
polarizable Hodge module of weight $w$. Then $M$ admits a decomposition by
strict support, and because the vanishing theorem is true for each summand by
\theoremref{thm:Saito-pure}, it is true for $M$ as well. To deal with the general
case, recall that every $M \in \MHMp{Z}$ has a finite weight filtration $W_{\bullet}
M$ with the property that $\gr_w^W M \in \HMp{Z}{w}$; because the functor $\gr_p^F
\DR$ is exact, we obtain the vanishing theorem for arbitrary graded-polarizable mixed
Hodge modules.

\subsection{Idea of the proof}

To prove \theoremref{thm:Saito-pure}, we shall use a method invented by Esnault and
Viehweg. The general idea, explained for example in \cite[\S1]{EV}, is to deduce
vanishing theorems from the $E_1$-degeneration of certain spectral sequences.

As a motivation for what follows, let us briefly recall how Esnault and Viehweg prove
the Kodaira vanishing theorem. Let $L$ be an ample line bundle on a smooth projective
variety $X$. For sufficiently large $N$, the line bundle $L^N$ becomes very ample,
and we can find a smooth divisor $D \subseteq X$ with $L^N \simeq \OX(D)$. Such a
divisor determines a branched covering $\pi \colon Y \to X$ (see
\parref{par:coverings}), and one can show that
\[
	\pil \OY \simeq \OX \oplus \bigoplus_{i=1}^{N-1} L^{-i} \quad \text{and} \quad
		\pil \OmY^1 \simeq \OmX^1 \oplus 
			\bigoplus_{i=1}^{N-1} \OmX^1(\log D) \tensor L^{-i}.
\]
Now $Y$ is again a smooth projective variety, and so its Hodge-de Rham spectral
sequence degenerates at $E_1$; in particular, the mapping $d \colon H^i(Y, \OY) \to
H^i(Y, \OmY^1)$ is equal to zero. From this, one can deduce that the restriction mapping
\[
	H^i(X, L^{-1}) \to H^i(D, \shO_D \tensor L^{-1})
\]
is also equal to zero: the key point is that $d \colon \OY \to \OmY^1$ induces a
$\CC$-linear mapping $L^{-1} \to \OmX^1(\log D) \tensor L^{-1}$, whose composition
with the residue mapping is, up to a constant factor, equal to the $\OX$-linear mapping
$L^{-1} \to \shO_D \tensor L^{-1}$. Consequently, 
\[
	H^i(X, L^{-N-1}) \to H^i(X, L^{-1})
\]
must be surjective; because of Serre duality,
\[
	H^i(X, \omX \tensor L) \to H^i(X, \omX \tensor L^{N+1})
\]
must be injective. But now we can kill the right-hand side by taking $N \gg 0$, and
so we get the vanishing of $H^i(X, \omX \tensor L)$ for $i > 0$.

The proof of \theoremref{thm:Saito-pure} follows the same path. Since $Z$ may be
singular, we first extend the line bundle $L$ to a small open neighborhood $X$ in
some projective embedding of $Z$. We then take a sufficiently generic branched
covering $\pi \colon Y \to X$, and use the strictness of direct images for
polarizable Hodge modules to prove that
\[
	H^i \Bigl( Z, L \tensor \gr_p^F \DR(\Mmod) \Bigr) \to
	H^i \Bigl( Z, L^{N+1} \tensor \gr_p^F \DR(\Mmod) \Bigr)
\]
must be injective for $i > 0$. Because the complex $\gr_p^F \DR(\Mmod)$ is
concentrated in non-positive degrees, we can again kill the right-hand side by taking
$N \gg 0$. This proves half of \theoremref{thm:Saito-pure}; the other half follows by
Sere duality, because the de Rham complex is compatible with the duality functor (see
\lemmaref{lem:polarization}).

\subsection{Note to the reader}

I wrote this paper for those who already know the definitions and basic results from
the theory of polarizable Hodge modules \cite{Saito-HM}. If you are not familiar with
Saito's theory, but nevertheless interested in the proof of the vanishing theorem, I
would recommend taking a look at Saito's nicely-written survey article
\cite{Saito-th} or at the more recent \cite{sanya}. Two of the results that we need
-- about Hodge modules on singular varieties and about non-characteristic inverse
images -- are distributed among several of Saito's papers; in the interest of
readability, I therefore decided to include an outline of their proof.  

Please note that I chose to use left $\Dmod$-modules throughout: this
is more convenient when working with inverse images and simplifies certain
arguments with differential forms. However, because Saito uses right $\Dmod$-modules,
a little bit of translation is needed when looking up results in
\cite{Saito-HM,Saito-MHM}. The rules are as follows. Suppose that $(\Mmod,
F_{\bullet} \Mmod)$ is a filtered left $\Dmod$-module on an $n$-dimensional complex
manifold $X$. Then the associated filtered right $\Dmod$-module is
\[
	\Bigl( \omX \tensor_{\OX} \Mmod, \, \omX \tensor_{\OX} F_{\bullet+n} \Mmod \Bigr),
\]
with $\Dmod$-module structure given by $(\omega \tensor m) \cdot \xi =
(\omega \cdot \xi) \tensor m - \omega \tensor (\xi \cdot m)$, where $\omega$, $m$,
and $\xi$ are sections of $\omX$, $\Mmod$, and the tangent sheaf $\shT_X$,
respectively.

The conventions for indexing the $V\!$-filtration are also different for left and right
$\Dmod$-modules. In the case of $\Mmod$, the rational $V\!$-filtration along $t = 0$ is
a decreasing filtration $V^{\bullet} \Mmod$ with the property that $t \partial_t -
\alpha$ acts nilpotently on $\gr_V^{\alpha} \Mmod$. The corresponding filtration on
$\omX \tensor \Mmod$ \cite[D\'efinition~3.1.1]{Saito-HM} is the increasing filtration
\[
	V_{\bullet} \bigl( \omX \tensor_{\OX} \Mmod \bigr)
		= \omX \tensor_{\OX} V^{-\bullet-1} \Mmod;
\]
the change is needed to keep the (right) action of $t \partial_t - \alpha$ on
$\gr_{\alpha}^V(\omX \tensor \Mmod)$ nilpotent.

\subsection{Acknowledgements}

I thank Mihnea Popa for many useful conversations about mixed Hodge modules and
vanishing theorems. During the preparation of this paper, I have been supported in
part by NSF-grant DMS-1331641. 

\section{Some background}

\subsection{Mixed Hodge modules on singular varieties}
\label{par:MHM}

Since \theoremref{thm:Saito} is stated for mixed Hodge modules on projective
varieties, it may be helpful to review the definition of the category $\MHMp{Z}$ in
the case where $Z$ is a possibly singular projective algebraic variety. The idea is
to embed $Z$ into a complex manifold $X$ (such as projective space), and then to
define
\[
	\MHMp{Z} \subseteq \MHMp{X}
\]
as the full subcategory of all graded-polarizable mixed Hodge modules on $X$ whose
support is contained in $Z$. To make this definition meaningful, one has to show that
the resulting category does not depend on the embedding; we shall explain below how
this is done.

\begin{note}
The same definition works for any analytic space that can be embedded into a
complex manifold. On an arbitrary analytic space $Z$, such embeddings may only exist
locally, and so one has to cover $Z$ by embeddable open subsets and work with
collections of mixed Hodge modules on the open sets that are compatible on
intersections. This idea is developed in \cite{Saito-an}. 
\end{note}

From now on, let $Z$ be a reduced projective algebraic variety. Given an embedding $i
\colon Z \into X$ into a complex manifold, we consider the full subcategory
\[
	\MHMps{X}{i} \subseteq \MHMp{X}
\]
of all graded-polarizable mixed Hodge modules on $X$ whose support is contained in
the image of $Z$. Obviously, we can always take $X$ to be projective space of some
dimension; but for the proof of \theoremref{thm:Saito}, it will be useful to allow
other complex manifolds, too. It is easy to see that $\MHMps{X}{i}$ is an abelian
category. The main result is that this category does not depend on the choice of
embedding.

\begin{proposition} \label{prop:independence}
Given two embeddings $i \colon Z \into X$ and $j \colon Z \into Y$, 
one has a canonical equivalence of categories between $\MHMps{X}{i}$ and
$\MHMps{Y}{j}$.
\end{proposition}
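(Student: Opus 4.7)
The plan is to reduce the statement to Kashiwara's equivalence for mixed Hodge modules, which asserts that for any closed embedding of complex manifolds $k \colon V \into W$, the direct image $k_{+}$ is fully faithful from $\MHMp{V}$ into $\MHMp{W}$, with essential image the full subcategory of objects supported on $k(V)$.

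The strategy is to compare $\MHMps{X}{i}$ and $\MHMps{Y}{j}$ through a common intermediate category. Forming the product embedding $(i,j) \colon Z \into X \times Y$, I would construct canonical equivalences
\[
	\MHMps{X}{i} \xrightarrow{\sim} \MHMps{X \times Y}{(i,j)} \xleftarrow{\sim} \MHMps{Y}{j}
\]
and compose them. To build the left-hand equivalence, I would use a local extension argument: since the coordinate functions of a chart on $Y$ restricted to $j(Z)$ are holomorphic functions on a closed analytic subspace of $X$ and hence extend locally, one can cover $Z$ by open subsets $Z_{\alpha}$ on which $j|_{Z_{\alpha}}$ extends to a holomorphic map $f_{\alpha} \colon X_{\alpha} \to Y$ defined on a small neighborhood $X_{\alpha}$ of $i(Z_{\alpha})$ in $X$. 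The graph embedding $\Gamma_{f_{\alpha}} \colon X_{\alpha} \into X_{\alpha} \times Y$ is then a closed embedding of complex manifolds with $\Gamma_{f_{\alpha}} \circ i = (i,j)$ on $Z_{\alpha}$, so Kashiwara's equivalence produces a local equivalence between MHMs on $X_{\alpha}$ supported on $i(Z_{\alpha})$ and MHMs on $X_{\alpha} \times Y$ supported on $(i,j)(Z_{\alpha})$.

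The main obstacle is showing that these local equivalences are independent of the auxiliary choice of extension, so that they patch into a canonical global equivalence. Given two extensions $f, f'$ of $j|_{Z_{\alpha}}$, the graphs $\Gamma_{f}$ and $\Gamma_{f'}$ are distinct closed submanifolds of $X_{\alpha} \times Y$ that share $(i,j)(Z_{\alpha})$ as a common closed analytic subspace, and one needs a canonical isomorphism $(\Gamma_{f})_{+} M \simeq (\Gamma_{f'})_{+} M$ in $\MHMp{X_{\alpha} \times Y}$ for each $M \in \MHMps{X_{\alpha}}{i}$. I would establish this by showing that any MHM on $X_{\alpha} \times Y$ supported on $(i,j)(Z_{\alpha})$ is, via Kashiwara's equivalence, insensitive to the choice of smooth ambient submanifold containing $(i,j)(Z_{\alpha})$ — concretely, both graphs embed into a larger smooth tubular neighborhood to which one can apply the equivalence, giving the needed canonical identification. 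Once this is in hand, the local equivalences glue to a global equivalence $\MHMps{X}{i} \simeq \MHMps{X \times Y}{(i,j)}$. The right-hand equivalence is constructed by the symmetric argument, extending $i$ locally to holomorphic maps from neighborhoods in $Y$ to $X$, and composing the two canonical equivalences yields the proposition.
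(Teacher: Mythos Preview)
Your proposal is correct and follows essentially the same route as the paper: both arguments pass through the product embedding $(i,j)\colon Z\hookrightarrow X\times Y$, locally extend $j$ to a holomorphic map $f\colon U\to Y$ on a neighborhood $U\subseteq X$, push forward via the graph $(\id,f)\colon U\hookrightarrow U\times Y$, and invoke Kashiwara's equivalence to show independence of the choice of $f$ so that the local objects glue. The only cosmetic difference is the direction: the paper phrases the result as ``$p_*\colon\MHMps{X\times Y}{(i,j)}\to\MHMps{X}{i}$ is an equivalence'' and then builds its inverse, whereas you build that inverse directly; your ``tubular neighborhood'' justification for the canonical isomorphism $(\Gamma_f)_+M\simeq(\Gamma_{f'})_+M$ is slightly more elaborate than needed, since the paper simply notes that Kashiwara's equivalence already forces the identification.
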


The tool for proving this is the following version of Kashiwara's equivalence
for mixed Hodge modules.

\begin{proposition} \label{prop:Kashiwara}
Let $f \colon X \into Y$ be a closed embedding between two complex manifolds. For any
closed analytic subspace $i \colon Z \into X$, the direct image functor
\[
	\fl \colon \MHMp{X} \to \MHMp{Y}
\]
induces an equivalence of categories between $\MHMps{X}{i}$ and $\MHMps{Y}{f \circ i}$.
\end{proposition}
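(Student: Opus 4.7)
The plan is to establish \propositionref{prop:Kashiwara} by reducing to Saito's classical Kashiwara equivalence for the underlying filtered $\Dmod$-modules, and then checking that the extra structures (Hodge filtration, $\QQ$-structure, weight filtration, nearby/vanishing cycles) transfer correctly under the equivalence. As a preliminary, I would use that the question is local on $Y$: since $\MHMp{Y}$ is defined by gluing from local data, it suffices to prove the equivalence in a neighborhood of a point of $f(X)$.

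First I would reduce to the codimension-one case. Near any point of $f(X)$, one can choose local coordinates $(x_1, \dotsc, x_n, t_1, \dotsc, t_r)$ on $Y$ in which $X = \{t_1 = \dotsb = t_r = 0\}$, and then factor $f$ as a composition of codimension-one closed embeddings through the intermediate submanifolds $\{t_1 = \dotsb = t_k = 0\}$. Since the direct image is compatible with composition, an induction on $r$ reduces us to the case $r = 1$, where $X$ is cut out by a single holomorphic function $t$.

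In that case, let $N \in \MHMp{Y}$ have support contained in $f(Z)$, and let $(\Nmod, F_{\bullet} \Nmod)$ be the underlying filtered $\Dmod$-module. Because $N$ is a mixed Hodge module, its $V\!$-filtration along $t = 0$ is defined, and it is strict with respect to the Hodge filtration. Since $t$ is nilpotent on $\Nmod$ (because $\Supp \Nmod \subseteq \{t=0\}$), the classical Kashiwara equivalence gives $\Nmod \simeq \fp \Mmod$ with $\Mmod = \ker(t : \Nmod \to \Nmod)$; the strictness of the $V\!$-filtration then promotes this to an isomorphism of filtered $\Dmod$-modules with an appropriate shift in the filtration, cf.\ Saito's conventions recalled in the introduction. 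This construction gives a functor in the opposite direction, and routine diagram-chasing establishes the adjunction identities at the level of filtered $\Dmod$-modules.

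The main obstacle — and what actually requires Saito's theory rather than just the classical Kashiwara theorem — will be showing that this equivalence of filtered $\Dmod$-modules upgrades to an equivalence of mixed Hodge modules. Concretely, one must check: (a) the $\QQ$-structure on the perverse sheaf $\DR(\Nmod)$ corresponds under the topological Kashiwara equivalence to a $\QQ$-structure on $\DR(\Mmod)$; (b) the weight filtration $W_\bullet \Nmod$ is of the form $\fp W_\bullet \Mmod$; and (c) for every holomorphic function $g$ on $X$, the inductive conditions defining mixed Hodge modules on $X$ (involving $\psione{g}$ and $\phione{g}$) are equivalent, via the equivalence $\fp$, to the corresponding conditions on $Y$ applied to any local extension $\tilde g$ of $g$. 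Points (a) and (b) are formal once one unwinds the definitions, but (c) rests on Saito's compatibility between direct images under closed embeddings and the nearby/vanishing cycle functors, which is proved in \cite[\S2.16]{Saito-MHM}. Once these compatibilities are in place, the functor $\fl$ restricts to the asserted equivalence between $\MHMps{X}{i}$ and $\MHMps{Y}{f \circ i}$, and \propositionref{prop:independence} follows by applying this to a common embedding of $X$ and $Y$ into a larger manifold (for example, $X \times Y$ via the graph construction).
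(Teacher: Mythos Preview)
Your proposal is correct and follows the same line as the paper, which simply cites \cite[Lemme~5.1.9]{Saito-HM} and \cite[2.17.5]{Saito-MHM} and notes that the key technical point is descending the Hodge filtration via the compatibility axiom between $F_{\bullet}$ and the $V\!$-filtration \cite[Proposition~3.2.2]{Saito-HM}; your outline (reduce to codimension one, use classical Kashiwara for $\Mmod$, strictness of $V$ for $F_{\bullet}\Mmod$, then transport the remaining structures) is precisely the content of those references. One minor point: your final sentence drifts into the proof of \propositionref{prop:independence}, which is a separate statement in the paper and not part of what is being asked here.
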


\begin{proof}
This is proved in \cite[Lemme~5.1.9]{Saito-HM} for pure Hodge modules, and asserted in
\cite[2.17.5]{Saito-MHM} for mixed ones. The essential point is to show that the
underlying filtered $\Dmod$-module $(\Mmod, F_{\bullet} \Mmod)$ of a mixed Hodge
module $M \in \MHMps{Y}{f \circ i}$ comes from $X$. For $\Mmod$, this follows
from a more general result for coherent $\Dmod$-modules in Kashiwara's thesis; in
order to deal with the filtration $F_{\bullet} \Mmod$, one has to use one of the
axioms characterizing mixed Hodge modules \cite[Proposition~3.2.2]{Saito-HM}.
\end{proof}

Now let us prove \propositionref{prop:independence}. Since we cannot directly compare
$X$ and $Y$, we use the product embedding $(i,j) \colon Z \into X \times Y$, as in
the following diagram:
\begin{equation} \label{eq:embeddings}
\begin{tikzcd}
Z \arrow{dr}{(i,j)} \arrow[bend left=25]{drr}{j} \arrow[bend right=30]{ddr}{i} \\
& X \times Y \dar{p} \rar{q} & Y \\
& X
\end{tikzcd}
\end{equation}
Because the situation is symmetric, it suffices to show that the direct image functor
\[
	\pl \colon \MHMpS{X \times Y}{(i,j)} \to \MHMps{X}{i}
\]
is an equivalence of categories. Note that $\pl$ is obviously faithful: in fact, this
is true for the underlying perverse sheaves because $p$ is an isomorphism on the
image of $Z$, and the functor from mixed Hodge modules to perverse sheaves is
faithful. So the issue is to show that $\pl$ is essentially surjective.

Let $M \in \MHMp{X}$ be a graded-polarizable mixed Hodge module whose support is
contained in $i(Z)$. To construct from $M$ an object on $X \times Y$, we use the
existence of good local sections for $p$. More precisely, for every point of $Z$,
there is an open neighborhood $U \subseteq X$ and a holomorphic mapping $f \colon U
\to Y$ such that $f \circ i = j$; this follows from the basic properties of
holomorphic functions. Now
\[
	(\id, f) \colon U \into U \times Y 
\]
is a closed embedding with the property that $(\id, f) \circ i = (i,j)$, and so $(\id,
f)_{\ast} M$ is a graded-polarizable mixed Hodge module on $U \times Y$
whose support is contained in the image of $(i,j)$. If we choose a different
holomorphic mapping $f' \colon U \to Y$, then $(\id, f)_{\ast} M$ and $(\id,
f')_{\ast} M$ are canonically isomorphic by virtue of \propositionref{prop:Kashiwara}. 
This fact allows us to glue the local objects together into a well-defined object of
$\MHMpS{X \times Y}{(i,j)}$; it is clear from the construction that its image under
$\pl$ is isomorphic to the original mixed Hodge module $M$.

\subsection{Subquotients of the de Rham complex} 
\label{par:DR}

In this section, we collect a few general results about the graded quotients of the
de Rham complex. Let $X$ be a complex manifold, and let $(\Mmod, F_{\bullet} \Mmod)$
be a filtered $\Dmod$-module on $X$. We begin with a more careful local description
of the differentials in the complex
\[
	\DR(\Mmod) = \Bigl\lbrack
		\Mmod \to \OmX^1 \tensor \Mmod \to \dotsb \to \OmX^n \tensor \Mmod
	\Bigr\rbrack \decal{n}.
\]
Let $x_1, \dotsc, x_n$ be local holomorphic coordinates on $X$. Then the
differentials 
\[
	\nabla \colon \OmX^k \tensor \Mmod \to \OmX^{k+1} \tensor \Mmod
\]
in the de Rham complex are given by the formula
\begin{equation} \label{eq:DR-local}
	\nabla(\alpha \tensor m) = (-1)^n d\alpha \tensor m + (-1)^n 
		\sum_{i=1}^n (\dx_i \wedge \alpha) \tensor \frac{\partial}{\partial x_i} m.
\end{equation}
The extra factor of $(-1)^n$ is due to the shift in the definition of $\DR(\Mmod)$;
it is part of a consistent set of sign conventions. Because $F_{\bullet} \Mmod$ is a
good filtration, it is obvious from this description that each 
\begin{equation} \label{eq:DR-F}
	F_p \DR(\Mmod) = \Bigl\lbrack
		F_p \Mmod \to \OmX^1 \tensor F_{p+1} \Mmod \to \dotsb 
			\to \OmX^n \tensor F_{p+n} \Mmod
	\Bigr\rbrack \decal{n}
\end{equation}
is a subcomplex. When we go to one of the graded quotients $\gr_p^F \DR(\Mmod)$, we
obtain the following formula for the differentials:
\begin{align*}
	\OmX^k \tensor \gr_{p+k}^F \Mmod \to \OmX^{k+1} \tensor \gr_{p+k+1}^F \Mmod, \quad
	\alpha \tensor m \mapsto (-1)^n 
		\sum_{i=1}^n (\dx_i \wedge \alpha) \tensor \frac{\partial}{\partial x_i} m.
\end{align*}

Now let us consider the case where $(\Mmod, F_{\bullet} \Mmod)$ is part of a mixed
Hodge module on $X$. In the case where the support of $\Mmod$ is contained in an
analytic subset $Z$, the properties of mixed Hodge modules imply that each
$\gr_p^F \DR(\Mmod)$ is actually a complex of coherent $\OZ$-modules.

\begin{lemma} \label{lem:subquotient}
Let $M \in \MHMp{X}$ be a mixed Hodge module on a complex manifold $X$. If the
support of $M$ is contained in an analytic subset $Z \subseteq X$, then each
\[
	\gr_p^F \DR(\Mmod) = \Bigl\lbrack
		\gr_p^F \Mmod \to \OmX^1 \tensor \gr_{p+1}^F \Mmod \to \dotsb 
			\to \OmX^n \tensor \gr_{p+n}^F \Mmod
	\Bigr\rbrack \decal{n}
\]
is a well-defined complex of coherent $\OZ$-modules; its isomorphism class in
$\Dbcoh(\OZ)$ is independent of the embedding of $Z$ into a complex manifold.
\end{lemma}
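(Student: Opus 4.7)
The lemma asserts two things: that $\gr_p^F \DR(\Mmod)$ is a well-defined complex of coherent $\OZ$-modules, and that its class in $\Dbcoh(\OZ)$ does not depend on the choice of ambient $X$. My plan is to establish the first by combining the $\OX$-linearity of the induced differentials on the graded level with an annihilation property of $\shI_Z$ that comes out of the Hodge module axioms, and to establish the second via the product embedding trick used in the proof of \propositionref{prop:independence}, together with a compatibility between $\gr_p^F \DR$ and direct image under a closed immersion.

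For the first claim, I would check that the differentials induced on $\gr_p^F \DR(\Mmod)$ are $\OX$-linear. Inspecting formula \eqref{eq:DR-local} applied to a lift $\tilde{m} \in F_{p+k}\Mmod$ of a class $m \in \gr_{p+k}^F \Mmod$, the term $d\alpha \tensor \tilde m$ lies in $\OmX^{k+1} \tensor F_{p+k}\Mmod$ and hence vanishes in $\gr_{p+k+1}^F$, and the only potential obstruction to $\OX$-linearity, namely the Leibniz correction $(\partial_i g)\tilde m$ produced when moving a function $g \in \OX$ across $\partial_i$, also lies in $F_{p+k}\Mmod$ and is killed in the graded. So $\gr_p^F \DR(\Mmod)$ is at least a complex of coherent $\OX$-modules with $\OX$-linear differentials. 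To upgrade each term to a coherent $\OZ$-module, it suffices to show that every local section $f$ of $\shI_Z$ annihilates $\gr_p^F\Mmod$. Since $\Supp \Mmod \subseteq Z \subseteq \{f = 0\}$, the compatibility between the Hodge filtration and the $V$-filtration along $f$ (part of the axioms of a mixed Hodge module, accessed through the graph embedding of $f$) forces $f \cdot F_p\Mmod \subseteq F_{p-1}\Mmod$, which gives the required vanishing.

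For the second claim, given embeddings $i \colon Z \into X$ and $j \colon Z \into Y$, I would form the product $(i,j)\colon Z \into X \times Y$ with projections $p$ and $q$, and let $\Mmod'$ on $X \times Y$ denote the filtered $\Dmod$-module underlying the mixed Hodge module $M'$ with $\pl M' \cong M$ constructed in the proof of \propositionref{prop:independence}. My goal is a canonical isomorphism
\[
	\gr_p^F \DR_X(\Mmod) \cong \gr_p^F \DR_{X \times Y}(\Mmod') \quad \text{in } \Dbcoh(\OZ),
\]
since the analogous isomorphism for $Y$ follows by symmetry and composing yields the required independence. Locally around any point of $Z$, choose an open $U \subseteq X$ and a holomorphic lift $h \colon U \to Y$ as in the proof of \propositionref{prop:independence}, so that $\Mmod'|_{U \times Y}$ identifies with $(\id, h)_{+}(\Mmod|_U)$ via \propositionref{prop:Kashiwara}. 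The local isomorphism then comes from the general compatibility
\[
	g_{\ast} \gr_p^F \DR_X(\Nmod) \xrightarrow{\sim} \gr_p^F \DR_Y(g_{+}\Nmod)
\]
in $\Dbcoh(\OZ)$, valid for any closed embedding $g \colon X \into Y$ of complex manifolds and any filtered $\Dmod$-module $\Nmod$ underlying a Hodge module. Two different local lifts produce canonically isomorphic direct images by a further application of \propositionref{prop:Kashiwara}, so these local isomorphisms glue to a global one.

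The main obstacle is the direct-image compatibility displayed above. In local coordinates in which $g$ is cut out by $t_1 = \dots = t_c = 0$, one has $g_{+}\Nmod$ realized as $\Nmod\lbrack \partial_{t_1},\dots,\partial_{t_c} \rbrack$ with $F_p(g_{+}\Nmod) = \bigoplus_{\alpha} \partial_t^{\alpha} F_{p-\abs{\alpha}-c}\Nmod$, and $\gr_p^F \DR_Y(g_{+}\Nmod)$ then unfolds into a Koszul-type double complex which becomes quasi-isomorphic, after pushing forward along $g$, to $\gr_p^F \DR_X(\Nmod)$. No new conceptual input is required, only careful bookkeeping of the sign conventions in \eqref{eq:DR-local}, the shift $\decal{n}$ in the definition of $\DR$, and the $\abs{\alpha}$-shifts in the Hodge filtration under $g_{+}$.
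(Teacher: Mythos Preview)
Your argument for the first claim matches the paper's almost exactly: both reduce to showing $f \cdot F_p\Mmod \subseteq F_{p-1}\Mmod$ for every local section $f$ of $\shI_Z$, which the paper phrases as ``quasi-unipotent and regular along $f=0$'' and attributes to \cite[Lemme~4.2.6]{Saito-HM}. Your extra remark about $\OX$-linearity of the graded differentials is correct and worth including, though the paper leaves it implicit.

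For the second claim, both you and the paper invoke the product embedding $(i,j)\colon Z \into X \times Y$ and \propositionref{prop:independence}, but the comparison of de Rham complexes is handled differently. The paper works globally with the projection $p \colon X \times Y \to X$: since $p$ is an isomorphism over the image of $Z$, the isomorphism $\derR p_{\ast}\,\gr_p^F \DR(\Mmod') \simeq \gr_p^F \DR(\Mmod)$ drops out immediately from Saito's definition of the filtered direct image \cite[\S2.3.7]{Saito-HM}, with no local computation needed. You instead localize, choose sections $(\id,h)$ of $p$, reduce to a closed-embedding compatibility $g_{\ast}\,\gr_p^F \DR_X(\Nmod) \simeq \gr_p^F \DR_Y(g_{+}\Nmod)$, and verify that by an explicit Koszul computation. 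This is more hands-on and avoids citing Saito's machinery, but the step ``these local isomorphisms glue to a global one'' deserves a word of care: you are gluing quasi-isomorphisms, not isomorphisms of complexes, and morphisms in the derived category do not glue on the nose. The clean fix is to note that the Koszul map you describe is induced by a genuine morphism of complexes that is natural in the data, hence globalizes \emph{before} passing to the derived category; the quasi-isomorphism property is then checked locally. Alternatively, one can simply observe that your local computation is exactly what underlies the global statement the paper cites, so either route lands in the same place.
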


\begin{proof}
We first prove that each $\gr_p^F \Mmod$ is a coherent sheaf on $Z$. Let $f$ be an
arbitrary local section of the ideal sheaf $\shI_Z$; by the definition of mixed Hodge
modules, $(\Mmod, F_{\bullet} \Mmod)$ is quasi-unipotent and regular along $f = 0$. By
\cite[Lemme~4.2.6]{Saito-HM}, this implies that $f \cdot F_p \Mmod \subseteq F_{p-1}
\Mmod$, which means that $f$ annihilates $\gr_p^F \Mmod$.

The independence of the choice of embedding follows from
\propositionref{prop:independence} and the compatibility of the de Rham complex with
direct images. Suppose we have another embedding $j \colon Z \into Y$ into a complex
manifold. As in \eqref{eq:embeddings}, we consider the product embedding $(i,j)
\colon Z \into X \times Y$. By \propositionref{prop:independence}, we have $M \simeq
\pl M'$ for a graded-polarizable mixed Hodge module $M' \in \MHMp{X \times Y}$ whose 
support is contained in $(i,j)(Z)$; as the situation is symmetric, it suffices to
prove that 
\[
	\pl \Bigl( \gr_p^F \DR(\Mmod') \Bigr) \simeq 
	\derR \pl \Bigl( \gr_p^F \DR(\Mmod') \Bigr) \simeq \gr_p^F \DR(\Mmod).
\]
But because $p$ is an isomorphism over the image of $Z$, this follows from the
definition of the direct image functor for filtered $\Dmod$-modules \cite[\S2.3.7]{Saito-HM}.  
\end{proof}

Another very useful result is the compatibility of the de Rham complex with the
duality functor. In combination with Serre duality, it can be used to show that the two
assertions for $L$ and $L^{-1}$ in \theoremref{thm:Saito} are equivalent.

\begin{lemma} \label{lem:polarization}
Let $M \in \HMp{X}{w}$ be a polarizable Hodge module on an $n$-dimensional complex
manifold $X$. Then any polarization on $M$ induces an isomorphism
\[
	\derR \shHom_{\OX} \Bigl( \gr_p^F \DR(\Mmod), \omX \decal{n} \Bigr) 
		\simeq \gr_{-p-w}^F \DR(\Mmod).
\]
\end{lemma}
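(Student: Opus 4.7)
The idea is to factor the desired isomorphism through the $\Dmod$-module duality functor $\DD$. Specifically, I would prove separately the two identifications
\[
	\derR \shHom_{\OX}\Bigl(\gr_p^F \DR(\Mmod),\, \omX\decal{n}\Bigr) \simeq \gr_{-p}^F \DR(\DD\Mmod)
\]
and
\[
	\gr_{-p}^F \DR(\DD\Mmod) \simeq \gr_{-p-w}^F \DR(\Mmod).
\]
The first is a general compatibility between the graded de Rham functor and the duality functor on filtered $\Dmod$-modules, while the second is a direct consequence of the fact that a polarization identifies $M$ with its dual up to a Tate twist.

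The polarization step is the easier one. By definition, a polarization on $M \in \HMp{X}{w}$ is an isomorphism $M \simeq \DD M(-w)$ in the category of polarizable Hodge modules. Passing to the underlying filtered $\Dmod$-module and unwinding the effect of the Tate twist on the Hodge filtration yields an isomorphism identifying $F_{\bullet} \Mmod$ with a shift by $w$ of $F_{\bullet}(\DD\Mmod)$; taking graded pieces and feeding the result into the de Rham complex produces the second identification.

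For the first identification, I would work locally in holomorphic coordinates $x_1,\ldots,x_n$. As recalled in \parref{par:DR}, the complex $\gr_p^F \DR(\Mmod)$ has terms $\OmX^k \tensor \gr_{p+k}^F \Mmod$ with $\OX$-linear differentials given by multiplication with the principal symbols $\dx_i \tensor \lbrack \partial/\partial x_i \rbrack$; equivalently, it is the degree-$p$ part of the Koszul-type complex for the action of $\gr^F \Dmod \simeq \Sym^{\bullet} \shT_X$ on $\gr^F \Mmod$. Choosing a compatible filtered free resolution of $(\Mmod, F_{\bullet} \Mmod)$ over $\Dmod$ gives simultaneously a model for $\DD\Mmod$ as a filtered $\Dmod$-module and a tractable presentation of $\gr_p^F \DR(\Mmod)$ in each degree. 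Applying $\derR \shHom_{\OX}(-, \omX\decal{n})$ term by term, and using the Serre-duality pairing $\OmX^k \tensor \OmX^{n-k} \to \omX$, then converts this complex into the graded de Rham complex of $\DD\Mmod$, with the filtration index negated.

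The main obstacle is the first identification. Although local in nature, it requires a careful treatment of the duality functor on filtered $\Dmod$-modules and its precise effect on the Hodge filtration; this depends on keeping sign and shift conventions consistent, and the paper's use of left $\Dmod$-modules (in contrast to Saito's convention) means that these shifts must be re-derived with some care. Once the compatibility is in hand, combining it with the polarization isomorphism yields the claimed result.
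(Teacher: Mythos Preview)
Your overall strategy is the same as the paper's: reduce to the compatibility of the graded de Rham functor with filtered $\Dmod$-module duality, and then invoke the polarization isomorphism $M(w) \simeq \DD M$ to shift the filtration index by $w$. The difference lies in how the duality compatibility is established. You propose to work locally with the Koszul description of $\gr_p^F \DR(\Mmod)$ and a filtered free resolution of $(\Mmod, F_{\bullet}\Mmod)$ over $\Dmod_X$, dualizing term by term via the pairing $\OmX^k \tensor \OmX^{n-k} \to \omX$. The paper instead resolves the target: it uses Saito's equivalence between filtered $\Dmod$-modules and filtered differential complexes, together with an injective resolution $\omX \to \shK^{\bullet}$ by right $\Dmod$-modules that are injective as $\OX$-modules, so that the de Rham complex of $\DD M$ is realized directly as the filtered differential complex $\shHom_{\OX}(\DR(\Mmod), \shK^{\bullet})$; the identification of graded pieces then drops out from the filtration rule $\gr_p^F \shK^j = 0$ for $p \neq 0$. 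Your route is workable but requires tracking how the filtered free resolution produces the Hodge filtration on $\DD\Mmod$ and how that interacts with $\gr^F \DR$, whereas the paper's approach bypasses this by staying on the side of differential complexes, where the duality is simply $\shHom_{\OX}(-, \omX\decal{n})$.
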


\begin{proof}
Recall that a polarization of $M$ induces an isomorphism $M(w) \simeq \DD M$ with the
dual Hodge module; in particular, the filtered $\Dmod$-module underlying $\DD M$ is
isomorphic to $(\Mmod, F_{\bullet-w} \Mmod)$. The assertion therefore follows from
the compatibility of the filtered de Rham complex with the duality functor
\cite[\S2.4.3]{Saito-HM}. Since the result is not explicitely stated there, we shall
quickly sketch the proof.

The main tool is the equivalence, on the level of derived categories, between
filtered $\Dmod$-modules and filtered differential complexes \cite[\S2.2]{Saito-HM};
under this equivalence, the pair $(\Mmod, F_{\bullet} \Mmod)$ goes to the de Rham
complex $\DR(\Mmod)$, endowed with the filtration in \eqref{eq:DR-F}. Now choose an
injective resolution
\[
	0 \to \omX \to \shK^{-n} \to \dotsb \to \shK^{-1} \to \shK^0 \to 0
\]
by right $\Dmod_X$-modules that are injective as $\OX$-modules; such a resolution
exists because the injective dimension of $\omX$ is equal to $n$. As explained in
\cite[\S2.4.11]{Saito-HM}, the de Rham complex of $\DD M$ is isomorphic, as
a filtered differential complex, to the simple complex associated with the double
complex
\[
	\shHom_{\OX} \bigl( \DR(\Mmod), \shK^{\bullet} \bigr).
\]
Here the filtration on the double complex is given by the rule
\begin{align*}
	F_p \shHom_{\OX}& \bigl( \OmX^{n-i} \tensor \Mmod, \shK^j \bigr) \\
		= &\menge{\phi \colon \OmX^{n-i} \tensor \Mmod \to \shK^j}{%
		\phi \bigl( \OmX^{n-i} \tensor F_{n-i-p-1} \Mmod \bigr) = 0},
\end{align*}
due to the fact that $\gr_p^F \shK^j = 0$ for $p \neq 0$. In particular, we have
\[
	\gr_p^F \shHom_{\OX} \bigl( \OmX^{n-i} \tensor \Mmod, \shK^j \bigr) \simeq 
		\shHom_{\OX} \bigl( \OmX^{n-i} \tensor \gr_{n-i-p}^F \Mmod, \shK^j \bigr).
\]
This gives us a canonical isomorphism in the derived category between the associated
graded of the de Rham complex of $\DD M$ and 
\[
	\derR \shHom_{\OX} \Bigl( \gr_{-\bullet}^F \DR(\Mmod), \omX \decal{n} \Bigr),
\]
using that $\shK^{\bullet}$ is quasi-isomorphic to $\omX \decal{n}$. Together with
the remark about the polarization from above, this implies the asserted isomorphism.
\end{proof}

This result also bounds the range in which the graded quotients of the de Rham
complex are nontrivial. Since $F_p \Mmod = 0$ for $p \ll 0$, it makes sense to define
\begin{equation} \label{eq:pM}
	p(M) = \min \menge{p \in \ZZ}{\gr_p^F \DR(\Mmod) \neq 0}.
\end{equation}
By definition, we have $\gr_p^F \DR(\Mmod) = 0$ for $p < p(M)$;
\lemmaref{lem:polarization} shows that the complex $\gr_p^F \DR(\Mmod)$ is also exact
for $p > -p(M) - w$. Another consequence is that the Grothendieck dual of the
coherent sheaf
\[
	\gr_{p(M)}^F \DR(\Mmod) = \omX \tensor F_{p(M)+n} \Mmod
\]
is isomorphic to the complex $\gr_{-p(M)-w}^F \DR(\Mmod)$.

%
%
%

\subsection{Non-characteristic inverse images}
\label{par:inverse-image}

In this section, we review the construction of inverse images for polarizable Hodge
modules under sufficiently generic morphisms. Let $f \colon Y \to X$ be a holomorphic
mapping between complex manifolds, and let $r = \dim Y - \dim X$ denote its relative
dimension. In this situation, we have the following morphisms between the cotangent
bundles of $X$ and $Y$:
\[
\begin{tikzcd}
Y \times_X T^{\ast} X \dar{p_2} \rar{\df} & T^{\ast} Y \\
T^{\ast} X
\end{tikzcd}
\]
Let $\Mmod$ be a regular holonomic left $\Dmod$-module on $X$, and $F_{\bullet} \Mmod$ a
good filtration by coherent $\OX$-modules. Recall that the characteristic variety
$\Ch(\Mmod) \subseteq T^{\ast} X$ is the support of the coherent sheaf determined by
the coherent $\gr_{\bullet}^F \! \Dmod_X$-module $\gr_{\bullet}^F \Mmod$.
The following definition is a slightly modified version of \cite[\S3.5.1]{Saito-HM}.

\begin{definition}
We say that the morphism $f$ is \define{non-characteristic} for $(\Mmod, F_{\bullet}
\Mmod)$ if the following two conditions are satisfied:
\begin{aenumerate}
\item The restriction of $\df$ to $p_2^{-1} \Ch(\Mmod)$ is a finite mapping.
\item We have $L^i \fu(\gr_p^F \Mmod) = 0$ for every $i < 0$ and every $p \in \ZZ$.
\end{aenumerate}
\end{definition}

The first condition is a transversality property. Since $\Mmod$ is regular holonomic,
one can find a Whitney stratification adapted to it; note that every irreducible
component of $\Ch(\Mmod)$ is the conormal variety of the closure of a stratum. 
Given a point $y \in Y$, let $S \subseteq X$ be the stratum containing $f(y)$; then
we are asking that 
\[
	T_{f(y)} S + \fl \bigl( T_y Y \bigr) = T_{f(y)} X.
\]
In the case where $Y$ is a subvariety of $X$, for example, this is saying that $Y$ is
transverse to every stratum. The second condition, on the other hand, is a kind of
flatness property: we are asking that the higher derived functors are trivial when we
pull back the $\OX$-modules $\gr_p^F \Mmod$. 

\begin{example}
Smooth morphisms are always non-characteristic.
\end{example}

The point of the two conditions is that the naive pullback $\fu \Mmod$ is again a
regular holonomic $\Dmod$-module on $Y$, and that the filtration $F_{\bullet} \fu
\Mmod = \fu F_{\bullet} \Mmod$ is again a good filtration. Except for regularity,
this is proved in \cite[Lemme~3.5.5]{Saito-HM}; the point is that 
$\fu \gr_{\bullet}^F \Mmod$ is coherent over $\gr_{\bullet}^F \! \Dmod_Y$, because
pushing forward by finite morphisms preserves coherence.

From now on, we consider the case of a polarizable Hodge module $M \in \HM{X}{w}$. We
say that $f \colon Y \to X$ is non-characteristic for $M$ if it is non-characteristic
for the underlying filtered $\Dmod$-module $(\Mmod, F_{\bullet} \Mmod)$. The
following result shows that the naive inverse image of $M$ is then again a polarizable
Hodge module.

\begin{theorem} \label{thm:inverse-image}
Let $M \in \HMZp{Z}{X}{w}$ be a polarizable Hodge module on $X$, with strict support
$Z$. If $f \colon Y \to X$ is non-characteristic for $M$, then we have
\[
	\fu \Mmod \simeq \Mmod_Y \quad \text{and} \quad
		\fu F_{\bullet} \Mmod \simeq F_{\bullet} \Mmod_Y
\]
for a polarizable Hodge module $M_Y \in \HMZp{f^{-1}(Z)}{Y}{w+r}$.
\end{theorem}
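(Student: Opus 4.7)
The plan is to reduce the statement to the case of a smooth hypersurface embedding and then to verify the axioms characterizing polarizable Hodge modules. First I would factor $f$ as the graph embedding $\gamma \colon Y \into Y \times X$ followed by the second projection $p \colon Y \times X \to X$. The projection $p$ is smooth, hence automatically non-characteristic, and for smooth morphisms the pullback of a polarizable Hodge module is again polarizable, with weight shifted by the relative dimension; this case is standard (the filtered $\Dmod$-module structure is simply pulled back, up to shift). The problem therefore reduces to the case where $f$ is a closed embedding.

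For closed embeddings I would induct on the codimension $-r$. Using a Whitney stratification adapted to $\Mmod$ together with the transversality condition (a), one factors $f$ locally as a composition of smooth hypersurface embeddings, each of which remains non-characteristic for the corresponding intermediate restriction. So it suffices to treat the case where $Y \subseteq X$ is a smooth hypersurface, cut out locally by a holomorphic function $t$.

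In this key case, the non-characteristic hypothesis is equivalent to the triviality of the $V\!$-filtration of $\Mmod$ along $t = 0$: one has $\gr_V^{\alpha} \Mmod = 0$ for $\alpha \neq 0$, so that $t$ acts bijectively on $V^{>0} \Mmod$, and consequently the naive restriction $\fu \Mmod = \Mmod / t \Mmod$ is a well-defined regular holonomic $\Dmod_Y$-module. Condition (b) then ensures that $\fu F_{\bullet} \Mmod = F_{\bullet} \Mmod / t F_{\bullet - 1} \Mmod$ is a good filtration. To promote $(\Mmod_Y, F_{\bullet} \Mmod_Y)$ to a polarizable Hodge module, I would verify the inductive axiom of \cite[Proposition~3.2.2]{Saito-HM}: for every local holomorphic function $g$ on $Y$, the nearby and vanishing cycles $\psi_g \Mmod_Y$ and $\phi_g \Mmod_Y$ should again be polarizable Hodge modules of the expected weights, with the correct strict support decomposition. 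Extending $g$ arbitrarily to a holomorphic function $\tilde g$ on a neighborhood of $Y$ in $X$, this would follow from the analogous property for $\Mmod$ via a compatibility between the $V\!$-filtration of $\Mmod_Y$ along $g$ and that of $\Mmod$ along $\tilde g$. Strict support on $f^{-1}(Z)$ is then a consequence of the transversality of $f$ to the strata of $Z$, and a polarization on $M_Y$ is inherited from one on $M$ via the compatibility of non-characteristic restriction with the duality functor used in \lemmaref{lem:polarization}.

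The main obstacle will be this last verification of the nearby and vanishing cycle axiom for $M_Y$: one needs non-characteristic restriction to commute, in an appropriate filtered sense, with $\psi_g$ and $\phi_g$ for an arbitrary local function $g$ on $Y$. This amounts to a delicate compatibility of $V\!$-filtrations in two independent directions ($t$ and $\tilde g$) and constitutes the technical heart of \cite[\S3.5]{Saito-HM}. Once this compatibility is in place, induction delivers the result for arbitrary non-characteristic closed embeddings, and combining with the smooth case handled at the start yields the full statement.
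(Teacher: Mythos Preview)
Your reduction via the graph factorization, and the further reduction of closed embeddings to codimension-one hypersurfaces, match the paper. Where you diverge is in how you certify that $(\fu\Mmod, \fu F_{\bullet}\Mmod)$ underlies a polarizable Hodge module.

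Rather than verifying the recursive nearby/vanishing-cycle axioms for $\fu\Mmod$ directly, the paper first constructs $M_Y$ \emph{independently} by invoking Saito's equivalence \cite[Theorem~3.21]{Saito-MHM} between $\HMZp{Z}{X}{w}$ and generically defined polarizable variations of Hodge structure: pull back the VHS underlying $M$ to a Zariski-open subset of $f^{-1}(Z)$ and extend it to $M_Y \in \HMZp{f^{-1}(Z)}{Y}{w+r}$. The problem then becomes a pure comparison of filtered $\Dmod$-modules, namely showing $\fu\Mmod \simeq \Mmod_Y$ and $\fu F_{\bullet}\Mmod \simeq F_{\bullet}\Mmod_Y$. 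For the hypersurface step the paper observes that $\fu\Mmod \simeq \gr_V^0\Mmod$ with $N = t\partial_t$ acting trivially; by the very \emph{definition} of $M$ as a Hodge module this is already a polarizable Hodge module of weight $w-1$, and the uniqueness in Theorem~3.21 identifies it with $M_Y$. No checking of $\psi_g$, $\phi_g$ for auxiliary $g$ on $Y$ is required. Your approach---essentially Saito's original one in \cite[\S3.5]{Saito-HM}---is correct but lands you in the two-variable $V\!$-filtration compatibility that you yourself flag as the hard part; the paper's use of Theorem~3.21 sidesteps it entirely. One further remark: the smooth case is not quite as ``standard'' as you suggest; the paper devotes a full lemma to it, using \cite[Remarque~3.2.3]{Saito-HM} to match the Hodge filtrations via the $V\!$-filtration, and this step is not purely formal.
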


This can be proved in several ways, but perhaps the cleanest one is to use the
relationship between polarizable Hodge modules and polarizable variations of Hodge
structure. According to \cite[Theorem~3.21]{Saito-MHM}, the Hodge module $M$ comes
from a polarizable variation of Hodge structure of weight $w - \dim Z$ on a
Zariski-open subset of the smooth locus of $Z$. We may clearly assume that $f(Y)$
intersects $Z$; the transversality condition implies that the preimage of the smooth
locus of $Z$ is dense in $f^{-1}(Z)$, and that $\dim f^{-1}(Z) = \dim Z + r$. We can
therefore pull the variation of Hodge structure back to a Zariski-open subset of
$f^{-1}(Z)$, and use Saito's result again to extend it to a polarizable Hodge module
$M_Y \in \HMZp{f^{-1}(Z)}{Y}{w+r}$; this procedure explains why the weight changes by
the relative dimension $r$. We denote the underlying filtered $\Dmod$-module by
$(\Mmod_Y, F_{\bullet} \Mmod_Y)$.

It remains to show that $\Mmod_Y \simeq \fu \Mmod$ and that $F_{\bullet} \Mmod_Y
\simeq \fu F_{\bullet} \Mmod$. By construction, this is true on the Zariski-open
subset to which we pulled back the variation of Hodge structure; what we have to
prove is that both sides are extended to $Y$ in the same way. Here the strategy is to
use some of the conditions in the definition of Hodge modules, in particular the
compatibility between the Hodge filtration and the $V\!$-filtration.

We observe first that $f$ factors through its graph as
\[
\begin{tikzcd}
Y \rar{i} \arrow[bend left=40]{rr}{f} & Y \times X \rar{p_2} & X;
\end{tikzcd}
\]
because both $p_2$ and $i$ are again non-characteristic for $M$, it suffices to
deal separately with the case of a smooth morphism and the case of a closed
embedding.

\begin{lemma}
When $f \colon Y \to X$ is a smooth morphism, \theoremref{thm:inverse-image} is true.
\end{lemma}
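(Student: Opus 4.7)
The plan is to reduce the lemma to a model case of a projection, verify the Hodge module axioms for the naive pullback, and then invoke uniqueness of extensions.

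Since the claim is local on $Y$, I would first pick coordinates in which $f$ becomes the projection $p \colon X \times \Delta^r \to X$. In this case $\fu \Mmod \simeq \Mmod \boxtimes \shO_{\Delta^r}$ as a coherent $\shO_Y$-module, equipped with the obvious $\Dmod_Y$-module structure and with the good filtration $\fu F_\bullet \Mmod = F_\bullet \Mmod \boxtimes \shO_{\Delta^r}$; the underlying perverse sheaf, polarization, and $\QQ$-structure all pull back in the same explicit way, up to a shift by $r$.

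The heart of the matter is to show that $(\fu \Mmod, \fu F_\bullet \Mmod)$, with this additional structure, underlies a polarizable Hodge module of weight $w + r$ on $Y$ with strict support $f^{-1}(Z)$. By Saito's inductive definition, this reduces to checking, for every local holomorphic function $h$ on $Y$ defining a hypersurface, that the pair is quasi-unipotent and regular along $h = 0$ and that $\psione{h} \fu \Mmod$ and $\phione{h} \fu \Mmod$ are polarizable Hodge modules of the correct weight. Shrinking further, I may assume that either $h$ is pulled back from $X$, say $h = g \circ p$, or $h$ depends nontrivially on the fibre coordinates. In the first case, the flatness of $p$ implies that the $V\!$-filtration commutes with $p^{\ast}$ and that $\psione{h} \fu \Mmod \simeq \psione{g} \Mmod \boxtimes \shO_{\Delta^r}$ as filtered $\Dmod$-modules, so all required properties transport from $M$. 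In the second case, $h = 0$ is a smooth hypersurface transverse to the fibres of $p$, hence non-characteristic for $\fu \Mmod$, and its nearby and vanishing cycles are trivial.

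Once this is established, uniqueness does the rest. The polarizable Hodge module $M_Y$ was constructed precisely so that its generic variation of Hodge structure is the pullback of that of $M$, and the same is true of the Hodge module produced in the previous paragraph; by Saito's equivalence \cite[Theorem~3.21]{Saito-MHM} between polarizable Hodge modules with strict support $f^{-1}(Z)$ and generically defined polarizable variations of Hodge structure on $f^{-1}(Z)$, the two are canonically isomorphic, yielding $\fu \Mmod \simeq \Mmod_Y$ and $\fu F_\bullet \Mmod \simeq F_\bullet \Mmod_Y$. The main obstacle is the verification in the middle paragraph, specifically the compatibility of the Hodge filtration with the $V\!$-filtration under smooth pullback; this is a technical but standard consequence of flat base change, implicit already in \cite[Lemme~3.5.5]{Saito-HM}.
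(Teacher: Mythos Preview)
Your overall strategy---verify directly that the naive pullback satisfies Saito's inductive axioms, then appeal to uniqueness via \cite[Theorem~3.21]{Saito-MHM}---is different from the paper's, and in principle it could work, but the middle paragraph has a genuine gap. The dichotomy ``either $h = g \circ p$ or $h = 0$ is a smooth hypersurface transverse to the fibres'' does not exhaust the local holomorphic functions on $X \times \Delta^r$. Consider for instance $h = x_1 + y_1^2$ at the origin: this is not pulled back from $X$, yet its differential in the fibre direction vanishes at that point, so $\{h=0\}$ is tangent to the fibre there. More dramatically, functions like $h = y_1^2 + y_2^2$ have singular zero locus. Saito's definition requires the nearby/vanishing cycle conditions for \emph{every} local $h$, and your case analysis simply does not cover these. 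Even in your ``second case'', the claim that nearby and vanishing cycles are ``trivial'' is too strong: when $h$ is a fibre coordinate, $\psione{h} \fu \Mmod$ is the restriction to $\{h=0\}$, which is $\Mmod \boxtimes \shO_{\Delta^{r-1}}$---nontrivial, and requiring an inductive argument you have not set up. A correct implementation of your strategy would invoke Saito's result on external products of Hodge modules, identifying $\fu M$ with $M \boxtimes \QQ_{\Delta^r}^H \decal{r}$; that is a theorem, not a triviality.

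The paper sidesteps this entirely. Rather than proving that $(\fu \Mmod, \fu F_{\bullet} \Mmod)$ is a Hodge module and then invoking uniqueness, it starts from $M_Y$ (already known to be a Hodge module by construction) and shows that $(\Mmod_Y, F_{\bullet} \Mmod_Y)$ and $(\fu \Mmod, \fu F_{\bullet} \Mmod)$ agree as filtered $\Dmod$-modules. The key tool is the explicit reconstruction formula
\[
	\Mmod = \sum_{i \geq 0} \partial_g^i \bigl( V^{>-1} \Mmod \bigr),
	\qquad
	F_p \Mmod = \sum_{i \geq 0} \partial_g^i \bigl( V^{>-1} \Mmod \cap \jl \ju F_{p-i} \Mmod \bigr),
\]
valid for any Hodge module with strict support not contained in $\{g=0\}$; since $V^{>-1}$ and the generic Hodge filtration are determined by the underlying variation of Hodge structure, and since the same formula holds for $\Mmod_Y$ with $h = g \circ f$, smooth base change (flatness) gives the comparison directly. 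This approach only ever uses the $V\!$-filtration along a \emph{single} well-chosen function $g$, rather than all functions $h$, which is what makes it tractable.
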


\begin{proof}
The question is local on $X$, and so we may assume that there is a holomorphic
function $g \colon X \to \CC$ such that $Z_0 = Z \cap g^{-1}(0)$ contains the
singular locus of $Z$, and such that $M$ comes from a polarizable variation of Hodge
structure on $Z \setminus Z_0$. We now define $h = g \circ f$, and consider the
following diagram: 
\[
\begin{tikzcd}
Y \rar{(\id, h)} \dar{f} & Y \times \CC \dar{f \times \id} \\
X \rar{(\id, g)} & X \times \CC 
\end{tikzcd}
\]
Because of \propositionref{prop:Kashiwara}, it suffices to prove the assertion for
the direct image $(\id, g)_{\ast} M$ on $X \times \CC$; this amounts to replacing
$(\Mmod, F_{\bullet} \Mmod)$ by 
\[
	\Mmod_g = \bigoplus_{i \geq 0} \Mmod \tensor \partial_t^i
		\quad \text{and} \quad
		F_{\bullet} \Mmod_g 
		= \bigoplus_{i \geq 0} F_{\bullet-i-1} \Mmod \tensor \partial_t^i,
\]
where $t$ denotes the coordinate on $\CC$ and $\partial_t = \partial/\partial t$
the corresponding vector field.  After making the obvious replacements, we may
therefore assume that $g^{-1}(0)$ and $h^{-1}(0)$ are complex manifolds, and that we
have holomorphic vector fields $\partial_g$ and $\partial_h$ with the property that
$\lie{\partial_g}{g} = 1$ and $\lie{\partial_h}{h} = 1$.

We will first prove that $\Mmod_Y \simeq \fu \Mmod$. Let $V^{\bullet} \Mmod$ and
$V^{\bullet} \Mmod_Y$ denote the rational $V\!$-filtrations along $g = 0$ and $h =
0$, respectively; for left $\Dmod$-modules, the conventions are that $g \colon
V^{\alpha} \Mmod \to V^{\alpha+1} \Mmod$ and $\partial_g \colon V^{\alpha} \Mmod \to
V^{\alpha-1} \Mmod$, and that the operator $g \partial_g - \alpha$ acts nilpotently
on $\gr_V^{\alpha} \Mmod = V^{\alpha} \Mmod / V^{> \alpha} \Mmod$. Now the point is
that $\Mmod$ has strict support $Z$, which is not contained in $g^{-1}(0)$; this
implies that $\partial_g \colon \gr_V^0 \Mmod \to \gr_V^{-1} \Mmod$ is surjective,
and hence that
\[
	\Mmod = \Dmod_X \cdot V^{-1} \Mmod 
	= \sum_{i=0}^{\infty} \partial_g^i \bigl( V^{>-1} \Mmod \bigr).
\]
Recall that $V^{>-1} \Mmod$ only depends on the restriction of $\Mmod$ to $Z
\setminus Z_0$, which is the flat bundle underlying our variation of Hodge structure.
By construction, $V^{>-1} \Mmod_Y \simeq \fu V^{>-1} \Mmod$, and so we obtain
\[
	\fu \Mmod \simeq \sum_{i=0}^{\infty} \partial_h^i \bigl( \fu V^{>-1} \Mmod \bigr)
		\simeq \sum_{i=0}^{\infty} \partial_h^i \bigl( V^{>-1} \Mmod_Y \bigr)
		\simeq \Mmod_Y.
\]
To get the corresponding statement for the filtrations, we will use the fact that $M$ and
$M_Y$ are Hodge modules. One of the conditions in the definition is that the mapping
$\partial_g \colon F_p \gr_V^{\alpha+1} \Mmod \to F_{p+1} \gr_V^{\alpha} \Mmod$ is
surjective for every $p \in \ZZ$ and every $\alpha < -1$; because $M$ has strict
support $Z$, this is also true when $\alpha = -1$. According to
\cite[Remarque~3.2.3]{Saito-HM}, we therefore have
\[
	F_p \Mmod = \sum_{i = 0}^{\infty} 
		\partial_g^i \bigl( V^{>-1} \Mmod \cap \jl \ju F_{p-i} \Mmod \bigr),
\]
where $j \colon X \setminus X_0 \into X$ denotes the open embedding; the right-hand
side is again determined by the variation of Hodge structure on $Z \setminus
Z_0$. Now the flatness condition in the definition of being non-characteristic
implies that
\begin{align*}
	\fu F_p \Mmod &\simeq \sum_{i = 0}^{\infty} 
		\partial_h^i \bigl( \fu V^{>-1} \Mmod \cap \jl \ju \fu F_{p-i} \Mmod \bigr) \\
	&\simeq \sum_{i = 0}^{\infty} 
		\partial_h^i \bigl( V^{>-1} \Mmod_Y \cap \jl \ju F_{p-i} \Mmod_Y \bigr)
	= F_p \Mmod_Y,
\end{align*}
which is the result we were after.
\end{proof}

\begin{lemma}
When $f \colon Y \to X$ is a closed embedding, \theoremref{thm:inverse-image} is
true.
\end{lemma}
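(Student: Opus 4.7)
The plan is to adapt the V-filtration argument from the previous lemma, with the function $g$ cutting out $Y$ playing the role of the direction along which $\Mmod$ is non-characteristic, and an auxiliary function $h$ defining $f^{-1}(Z_0)$ inside $Y$ handling the Hodge filtration extension.

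First, the statement is local on $X$, and every closed embedding of complex manifolds factors locally into a sequence of smooth codimension-one closed embeddings. Because the non-characteristic condition is preserved at each stage of the factorization, an induction on codimension reduces us to the case $Y = g^{-1}(0)$ for a single holomorphic function $g$ whose differential avoids $\Ch(\Mmod)$ off the zero section. In this situation the non-characteristic condition has a clean V-filtration translation: the V-filtration $V^{\bullet} \Mmod$ along $g = 0$ is trivial, so that multiplication by $g$ acts bijectively on the generic part of the V-filtration. Condition (b) in the definition of non-characteristic -- the vanishing of $L^i \fu \gr_p^F \Mmod$ for $i < 0$ -- is the stronger statement that $g$ acts injectively on each $F_p \Mmod$, and this forces $\iu \Mmod = \Mmod / g \Mmod$ to be a coherent $\Dmod_Y$-module with the induced filtration $F_p \iu \Mmod = F_p \Mmod / g F_{p-1} \Mmod$.

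As in the smooth case, on a Zariski-dense open subset of $f^{-1}(Z)$ both $\Mmod_Y$ and $\iu \Mmod$ restrict to the pullback of the variation of Hodge structure underlying $M$, by Saito's equivalence combined with the transversality part of the non-characteristic hypothesis. What remains is to show the extensions to all of $Y$ agree. I would choose, locally on $Y$, a defining function $h \in \OY$ for the relevant branch of $f^{-1}(Z_0)$, lift it to a function $\tilde h$ on $X$ with $\tilde h \circ i = h$, and apply the smooth-case formula
\[
	F_p \Mmod = \sum_{k=0}^{\infty} \partial_{\tilde h}^{\,k} \bigl( V^{>-1}\Mmod \cap \jl \ju F_{p-k}\Mmod \bigr)
\]
along $\tilde h = 0$. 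The triviality of the V-filtration along $g$ ensures that passage to the quotient $\Mmod / g\Mmod$ commutes with each term on the right-hand side, so this formula descends to give $F_p \Mmod_Y$, yielding the desired filtered identification $\fu F_{\bullet} \Mmod \simeq F_{\bullet} \Mmod_Y$.

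The hard part will be the bifiltered compatibility of the two V-filtrations: one must verify that the non-characteristic hypothesis implies the V-filtration of $\Mmod$ along $\tilde h$ restricts, under the closed embedding $i$, to the V-filtration of $\Mmod_Y$ along $h$, strictly with respect to the Hodge filtration. This is essentially the content of Saito's discussion of non-characteristic inverse images in \cite[\S3.5]{Saito-HM}, and it rests on the flatness condition (b) to ensure that $V^{\alpha} \Mmod \cap F_p \Mmod$ remains coherent and strictly compatible modulo $g$. Once this bifiltered compatibility is in place, the term-by-term identification of the two Saito formulas yields the asserted isomorphisms $\fu \Mmod \simeq \Mmod_Y$ and $\fu F_{\bullet} \Mmod \simeq F_{\bullet} \Mmod_Y$.
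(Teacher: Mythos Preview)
Your reduction to codimension one is correct, and you are right that the non-characteristic hypothesis forces the $V\!$-filtration along $g$ to be the $g$-adic filtration (this is \cite[Lemme~3.5.6]{Saito-HM}). But from that point on you take a detour that creates unnecessary difficulties and leaves the main step unproved.

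The paper's argument is much shorter and uses only the $V\!$-filtration along $g$ itself. Once that filtration is $g$-adic, one has $\gr_V^0 \Mmod \simeq \fu \Mmod$ and $\gr_V^{-1} \Mmod = 0$, and the flatness condition (b) gives $F_{\bullet} \gr_V^0 \Mmod \simeq \fu F_{\bullet} \Mmod$. Because $\gr_V^{-1} \Mmod = 0$, the nilpotent operator $N = g \partial_g$ acts trivially on $\gr_V^0 \Mmod$. Now one simply invokes the \emph{recursive definition} of Hodge modules: Saito's axioms say directly that $(\gr_V^0 \Mmod, F_{\bullet} \gr_V^0 \Mmod)$, equipped with the trivial monodromy weight filtration, underlies a polarizable Hodge module of weight $w-1$ on $Y$. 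Since it visibly has strict support $f^{-1}(Z)$, the uniqueness part of \cite[Theorem~3.21]{Saito-MHM} forces it to be $M_Y$. No auxiliary function is needed.

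Your approach instead tries to replay the smooth-case proof by introducing a second direction $\tilde h$ on $X$ and comparing the $V\!$-filtrations along $\tilde h$ and $h$. The ``hard part'' you isolate---that the $V\!$-filtration along $\tilde h$ restricts modulo $g$, bifiltered-strictly, to the $V\!$-filtration of $\Mmod_Y$ along $h$---is a genuine and nontrivial assertion, but you do not prove it, and it is not what \cite[\S3.5]{Saito-HM} actually establishes. There is also a small slip: since $g \in F_0 \Dmod_X$, the induced filtration on $\iu \Mmod = \Mmod / g\Mmod$ is $F_p \Mmod / g F_p \Mmod$, not $F_p \Mmod / g F_{p-1} \Mmod$. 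The moral is that in the closed-embedding case the inductive structure built into the definition of Hodge modules already hands you the answer via the $V\!$-filtration along $g$; there is no need to import the extension formula from the smooth case.
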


\begin{proof}
The problem is again local on $X$, and so we may assume that $f$ is a complete
intersection. If we factor $f$ into a composition of closed embeddings of
codimension $1$, then each step is again non-characteristic by
\cite[Lemme~3.5.4]{Saito-HM}; in this way, we reduce the problem to the case where
$Y$ is defined by a single holomorphic function $g \colon X \to \CC$. Because the
embedding is non-characteristic, it is not hard to show that $V^{\bullet} \Mmod$ is
the $g$-adic filtration \cite[Lemme~3.5.6]{Saito-HM}, and hence that 
\[
	\gr_V^0 \Mmod \simeq \fu \Mmod \quad \text{and} \quad
		\gr_V^{-1} \Mmod \simeq 0;
\]
moreover, the flatness condition implies that $F_{\bullet} \gr_V^0 \Mmod \simeq \fu
F_{\bullet} \Mmod$.  In particular, the action of $N = g \partial_g$ on $\gr_V^0
\Mmod$ is trivial; according to the definition of Hodge modules, this means that the
pair $(\fu \Mmod, \fu F_{\bullet} \Mmod)$ is part of a polarizable Hodge module of
weight $w-1$ on $Y$.  Because $\fu \Mmod$ has strict support $f^{-1}(Z)$, the
uniqueness statement in \cite[Theorem~3.21]{Saito-MHM} implies that this polarizable
Hodge module must be isomorphic to $M_Y$.
\end{proof}

\subsection{Branched coverings}
\label{par:coverings}

The proof of \theoremref{thm:Saito-pure} makes use of certain branched coverings. We
briefly review the construction in the special case that we need; for a more complete
discussion, including proofs, see \cite[\S3]{EV}.

Let $X$ be a complex manifold, and let $L$ be a holomorphic line bundle on $X$.
Suppose that for some integer $N \geq 1$, there is a global section $s \in H^0(X,
L^N)$ whose zero scheme is a smooth divisor $D \subseteq X$. In this situation, one
can construct another complex manifold $Y$ and a branched covering
\[
	\pi \colon Y \to X
\]
in the following way. Let $p \colon L \to X$ denote the projection from the line
bundle, now considered as a complex manifold. Then $\pu L$ has a tautological section
$s_L$, and we may define $Y$ as the zero scheme of the section $s_L^N - \pu s$. It is
easy to see that $Y$ is a complex manifold: over any open subset $U \subseteq X$
where $L$ is trivial, the section $s$ is represented by a holomorphic function $f
\colon U \to \CC$, and $\pi^{-1}(U)$ is the submanifold of $U \times \CC$ defined by
the equation $t^N = f$. The local description can be used to prove that
\[
	\pil \OY \simeq \OX \oplus \bigoplus_{i=1}^{N-1} L^{-i},
\]
and, more generally, that
\[
	\pil \OmY^k \simeq \OmX^k \oplus 
		\bigoplus_{i=1}^{N-1} \OmX^k(\log D) \tensor L^{-i}
\]
for $k = 1, \dotsc, \dim X$; here $\OmX^k(\log D)$ is the sheaf of logarithmic
differential forms. For instance, the summand $L^{-1}$ in the decomposition of $\pil
\OY$ corresponds to $t \cdot \OX$, and the summand $\OmX^k(\log D) \tensor
L^{-1}$ corresponds to 
\[
	t \cdot \OmX^k + \mathit{dt} \wedge \OmX^{k-1}
		= t \cdot \left( \OmX^k + \frac{\df}{f} \wedge \OmX^{k-1} \right),
\]
remembering that $\df/f = N \mathit{dt}/t$. In both formulas, the $N$ summands on the
right-hand side are in one-to-one correspondence with the characters of the group of
$N$-th roots of unity, which acts on $Y$ in the obvious way. 

\section{Proof of the theorem}

\subsection{Extending line bundles}

We now begin with the preparations for the proof of \theoremref{thm:Saito-pure}. 
Let $Z$ be a reduced and irreducible projective variety, and let $L$ be an ample line
bundle on $Z$. Fix an integer $N \geq 2$ such that $L^N$ is very ample; then $Z$
embeds into the projective space $P = \PP \bigl( H^0(Z, L^N) \bigr)$, and the restriction of
$\OP(1)$ is isomorphic to $L^N$. The purpose of this section is to
extend $L$ to a small open neighborhood of $Z$ in $P$; this will allow us to work with
branched coverings that are again complex manifolds.

\begin{lemma} \label{lem:extension}
It is possible to extend $L$ to a holomorphic line bundle $L_X$ on an open
neighborhood $X \supseteq Z$, in such a way that $L_X^N \simeq \OX(1)$.
\end{lemma}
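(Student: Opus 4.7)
The plan is to extract $L_X$ from the Kummer exact sequence of sheaves of abelian groups
$$1 \to \mu_N \to \OXst \xrightarrow{(\cdot)^N} \OXst \to 1$$
on a suitable open neighborhood $X$ of $Z$ in $P$. First I would choose $X$ small enough that the inclusion $Z \into X$ is a homotopy equivalence; such neighborhoods exist for every compact complex analytic subset of a complex manifold (see \cite{Durfee}). Consequently, restriction induces isomorphisms $H^k(X, \mu_N) \xrightarrow{\sim} H^k(Z, \mu_N)$ for every $k$, since $\mu_N$ is a locally constant sheaf.

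The Kummer sequence produces a connecting homomorphism $\delta_X \colon \Pic(X) \to H^2(X, \mu_N)$, and a line bundle on $X$ admits an $N$-th root precisely when its image under $\delta_X$ vanishes. By naturality, $\delta_X(\OX(1))$ restricts to $\delta_Z(L^N) \in H^2(Z, \mu_N)$, which is zero because $L$ is itself an $N$-th root of $L^N$ on $Z$. The isomorphism $H^2(X, \mu_N) \simeq H^2(Z, \mu_N)$ then forces $\delta_X(\OX(1)) = 0$, so $\OX(1)$ has at least one $N$-th root $L_X \in \Pic(X)$.

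It remains to arrange $L_X|_Z \simeq L$. By construction $L_X|_Z$ and $L$ are two $N$-th roots of the same line bundle $L^N$ on $Z$, so their ratio lies in the image of $H^1(Z, \mu_N) \to \Pic(Z)$. Using the isomorphism $H^1(X, \mu_N) \xrightarrow{\sim} H^1(Z, \mu_N)$, we lift this ratio to a class $\tilde\xi \in H^1(X, \mu_N)$ and view it as a line bundle on $X$ whose $N$-th power is trivial. Twisting $L_X$ by $\tilde\xi$ (with the correct sign) preserves the relation $L_X^N \simeq \OX(1)$ and simultaneously corrects the restriction to $Z$ to give exactly $L$.

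The main obstacle is really the topological input in the first step: one must find a neighborhood $X$ of $Z$ whose cohomology with coefficients in the locally constant sheaf $\mu_N$ agrees with that of $Z$. Once that is secured, the rest of the argument reduces to a formal diagram chase comparing the Kummer sequences on $X$ and on $Z$, exploiting the two pieces of input that $L^N$ is the restriction of $\OX(1)$ and that $L$ witnesses vanishing of the obstruction on $Z$.
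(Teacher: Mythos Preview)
Your proof is correct and shares the paper's essential topological input (Durfee's retraction neighborhoods to get $H^k(X,\mu_N)\simeq H^k(Z,\mu_N)$), but the cohomological machinery is different. The paper works with the exponential sequence $0 \to \ZZ(1) \to \OX \to \OXst \to 1$ on both $X$ and $Z$: it first produces a line bundle $M_X$ with $N \cdot c_1(M_X) = c_1(\OX(1))$ and $c_1(M_X)\restr{Z} = c_1(L)$, then corrects by classes in $H^1(X,\OX)$ and $H^1(Z,\OZ)$---dividing by $N$ in these $\CC$-vector spaces---to arrange both $L_X^N \simeq \OX(1)$ and $L_X\restr{Z} \simeq L$. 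You instead use the Kummer sequence $1 \to \mu_N \to \OXst \xrightarrow{(\cdot)^N} \OXst \to 1$, which is tailor-made for $N$-th roots: the obstruction to extracting a root lives in $H^2(\argbl,\mu_N)$ and the ambiguity in the choice of root lives in $H^1(\argbl,\mu_N)$, so the homotopy equivalence handles both steps at once with finite coefficients. Your approach is arguably more direct and avoids touching $H^1(\OX)$ altogether; the paper's approach stays closer to the classical Chern-class bookkeeping and makes the role of the analytic structure (via $H^1(\OX)$) more visible.
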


\begin{proof}
According to a result by Durfee \cite[Proposition~1.6 and \S2]{Durfee}, we can find 
an open set $X \subseteq P$ containing $Z$, with the property that the inclusion $Z
\into X$ is a homotopy equivalence. From the exponential sequence -- which is also
valid on $Z$ by definition of the sheaf $\OZ$ -- we obtain a commutative diagram 
\[
\begin{tikzcd}
H^1 \bigl( X, \ZZ(1) \bigr) \arrow[equal]{d} \rar & H^1(X, \OX) \dar \rar{\exp} & 
		H^1(X, \OXst) \dar \rar & H^2 \bigl( X, \ZZ(1) \bigr) \arrow[equal]{d} \\
H^1 \bigl( Z, \ZZ(1) \bigr) \rar & H^1(Z, \OZ) \rar{\exp} & 
		H^1(Z, \OZst) \rar & H^2 \bigl( Z, \ZZ(1) \bigr) 
\end{tikzcd}
\]
with exact rows. By construction, the first Chern class $c_1 \bigl( \OX(1) \bigr) \in
H^2 \bigl( X, \ZZ(1) \bigr)$ maps to $c_1(L^N) = N \cdot c_1(L)$, and is therefore
divisible by $N$. This means that we can find a holomorphic line bundle $M_X$ with
the property that
\[
	N \cdot c_1(M_X) = c_1 \bigl( \OX(1) \bigr) \quad \text{and} \quad
	c_1(M_X) \restr{Z} = c_1(L).
\]
Consequently, there are two elements $\alpha \in H^1(X, \OX)$ and $\beta \in H^1(Z,
\OZ)$ such that
\[
	\exp(\alpha) \cdot \class{M_X}^N = \class{\OX(1)} \quad \text{and} \quad
	\exp(\beta) \cdot \class{M_X} \restr{Z} = \class{L};
\]
square brackets mean the isomorphism class of the corresponding line bundle.
The element $\alpha \restr{Z} - N \beta$ belongs to the image of $H^1 \bigl( Z,
\ZZ(1) \bigr)$; by adjusting $\alpha$, we can arrange that $\beta$ is equal to the
restriction of $\alpha/N$. Now let $L_X$ be any holomorphic line bundle on $X$ with
\[
	\class{L_X} = \exp(\alpha/N) \cdot \class{M_X}.
\]
The formulas above show that $\class{L_X}^N = \class{\OX(1)}$ and $\class{L_X}
\restr{Z} = \class{L}$, and so we have found the desired extension of $L$.
\end{proof}

\subsection{Hodge modules and strictness}

For the remainder of the argument, we may assume that $Z$ is embedded into
a complex manifold $X$, in such a way that the given ample line bundle on $Z$ is the
restriction of a holomorphic line bundle $L$ on $X$. We may also assume that $M \in
\HMZp{Z}{X}{w}$ is a polarizable Hodge module on $X$ with strict support $Z$; this is
because the graded quotients of the de Rham complex do not depend on the embedding
(by \lemmaref{lem:subquotient}). It is important to keep in mind that the underlying
filtered $\Dmod$-module $(\Mmod, F_{\bullet} \Mmod)$ lives on $X$.

Now let $D \subseteq X$ be the divisor of a sufficiently general section $s \in
H^0(X, L^N)$ or, more concretely, the intersection of $X$ with a sufficiently general
hyperplane $H \subseteq P$. Then $D$ is non-characteristic for $M$, and so we obtain
from \theoremref{thm:inverse-image} a polarizable Hodge module $M_D \in \HMZp{D \cap
Z}{D}{w-1}$ with the property that
\[
	\Mmod_D \simeq \Mmod \restr{D} \quad \text{and} \quad
	F_p \Mmod_D \simeq F_p \Mmod \restr{D}.
\]
By construction, we have $L^N \simeq \OX(D)$, and we denote by
\[
	\pi \colon Y \to X
\]
the resulting branched covering of $X$; since $D$ is smooth, $Y$ is again a complex
manifold. It is easy to see that $\pi$ is also non-characteristic for $M$;
this gives us another polarizable Hodge module $M_Y \in \HMZp{\pi^{-1}(Z)}{Y}{w}$
with
\[
	\Mmod_Y \simeq \piu \Mmod \quad \text{and} \quad
	F_p \Mmod_Y \simeq \piu F_p \Mmod.
\]
Both $M_Y$ and $M_D$ will play a role in the proof of \theoremref{thm:Saito-pure}.

We begin by deducing the vanishing of certain morphisms from the fact that $M_Y$ is a
polarizable Hodge module. By construction, the support of $M_Y$ is equal to the
projective variety $\pi^{-1}(Z)$. According to Saito's direct image theorem
\cite[Th\'eor\`eme~5.3.1]{Saito-HM}, the direct image of $(\Mmod_Y, F_{\bullet}
\Mmod_Y)$ under the morphism from $Y$ to a point is therefore strict; concretely,
this means that the spectral sequence 
\begin{equation} \label{eq:spectral-sequence}
	E_1^{p,q} = H^{p+q} \bigl( Y, \gr_{-p}^F \DR(\Mmod_Y) \bigr)
		\Longrightarrow H^{p+q} \bigl( Y, \DR(\Mmod_Y) \bigr)
\end{equation}
degenerates at $E_1$. Since the spectral sequence comes from a filtered complex, it
is easy to describe the $E_1$-differentials in terms of $\DR(\Mmod_Y)$. For each $p \in
\ZZ$, we have a short exact sequence of complexes
\[
	0 \to \gr_{p-1}^F \DR(\Mmod_Y) \to 
		F_p \DR(\Mmod_Y) \big/ F_{p-2} \DR(\Mmod_Y) \to \gr_p^F \DR(\Mmod_Y) \to 0.
\]
In the derived category of complexes of sheaves of $\CC$-vector spaces, it is part of
a distinguished triangle; the third morphism in this triangle is
\[
	\gr_p^F \DR(\Mmod_Y) \to \gr_{p-1}^F \DR(\Mmod_Y) \decal{1}.
\]
As in the case of $d \colon \OY \to \OmY^1$, this morphism is in general not
$\OY$-linear. The degeneration of the spectral sequence in
\eqref{eq:spectral-sequence} has the following consequence.

\begin{lemma} \label{lem:strictness}
For every $i,p \in \ZZ$, the induced morphism on cohomology
\[
	H^i \bigl( Y, \gr_p^F \DR(\Mmod_Y) \bigr)
		 \to H^{i+1} \bigl( Y, \gr_{p-1}^F \DR(\Mmod_Y) \bigr)
\]
is equal to zero.
\end{lemma}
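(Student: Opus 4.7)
The plan is to identify the morphism in question with the $d_1$-differential of the Hodge-to-de Rham spectral sequence \eqref{eq:spectral-sequence}, and then to invoke the $E_1$-degeneration that is already established.

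First I would recall how the $d_1$-differentials of the spectral sequence associated to a filtered complex are computed. For the filtration $F_{\bullet} \DR(\Mmod_Y)$ and each integer $p$, the short exact sequence
\[
	0 \to \gr_{p-1}^F \DR(\Mmod_Y) \to
		F_p \DR(\Mmod_Y) \big/ F_{p-2} \DR(\Mmod_Y) \to \gr_p^F \DR(\Mmod_Y) \to 0
\]
displayed just before the statement sits in a distinguished triangle in the derived category whose connecting morphism is
\[
	\delta_p \colon \gr_p^F \DR(\Mmod_Y) \to \gr_{p-1}^F \DR(\Mmod_Y) \decal{1}.
\]
By the standard construction of the spectral sequence of a filtered complex, the map induced on hypercohomology by $\delta_p$ is precisely the $E_1$-differential
\[
	d_1 \colon E_1^{-p, i+p} = H^i \bigl( Y, \gr_p^F \DR(\Mmod_Y) \bigr)
		\to E_1^{-p+1, i+p} = H^{i+1} \bigl( Y, \gr_{p-1}^F \DR(\Mmod_Y) \bigr).
\]

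Next I would invoke Saito's direct image theorem: since $Y$ is compact and $M_Y$ is polarizable, the direct image of $(\Mmod_Y, F_{\bullet} \Mmod_Y)$ along the constant map $Y \to \pt$ is strict. As recalled in the excerpt, this strictness is equivalent to the degeneration at $E_1$ of the spectral sequence \eqref{eq:spectral-sequence}. In particular $d_1 = 0$, which by the identification above gives exactly the vanishing claimed in the lemma.

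There is essentially no obstacle beyond matching indices and being careful with sign conventions. The only substantive input is Saito's strictness/degeneration theorem; the rest is a purely formal statement about spectral sequences of filtered complexes. Thus the proof amounts to writing down the identification of the connecting morphism in the distinguished triangle above with the $d_1$-differential of \eqref{eq:spectral-sequence} and quoting strictness.
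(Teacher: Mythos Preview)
Your approach is exactly the one taken in the paper: identify the morphism with the $E_1$-differential of the spectral sequence \eqref{eq:spectral-sequence} and invoke Saito's strictness theorem. One small correction: you justify applying the direct image theorem by saying ``$Y$ is compact'', but $Y$ is a branched covering of the open neighborhood $X \subseteq P$ and is typically not compact. What makes Saito's theorem applicable is that the \emph{support} $\pi^{-1}(Z)$ of $M_Y$ is projective, so the morphism to a point is proper on the support; this is what the paper invokes.
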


\subsection{Comparison with the original complex}

The purpose of this section is to obtain information about the complex $\gr_p^F
\DR(\Mmod)$ from \lemmaref{lem:strictness}. The first step is to take the direct
image of $\DR(\Mmod_Y)$ by the finite morphism $\pi \colon Y \to X$.  

\begin{lemma} \label{lem:subcomplex}
The complex $\pil \DR(\Mmod_Y)$ has a direct summand isomorphic to
\[
	\Bigl\lbrack
		L^{-1} \tensor \Mmod \to \OmX^1(\log D) \tensor L^{-1} \tensor \Mmod \to \dotsb 
			\to \OmX^n(\log D) \tensor L^{-1} \tensor \Mmod
	\Bigr\rbrack \decal{n},
\]
compatible with the filtration $\pil F_{\bullet} \DR(\Mmod_Y)$.
\end{lemma}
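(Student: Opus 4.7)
The plan is to combine the projection formula with the $\mu_N$-equivariant decomposition of $\pil \OmY^k$ recalled in \parref{par:coverings}. Because $\pi$ is finite and $\Mmod_Y \simeq \piu \Mmod$ with $F_p \Mmod_Y \simeq \piu F_p \Mmod$, the projection formula gives, compatibly with the filtration by $F_{\bullet+k} \Mmod$ on the right,
\[
\pil \bigl( \OmY^k \tensor \Mmod_Y \bigr) \simeq \pil \OmY^k \tensor_{\OX} \Mmod.
\]
Substituting the decomposition from \parref{par:coverings} then yields
\[
\pil \OmY^k \tensor \Mmod \simeq \bigl( \OmX^k \tensor \Mmod \bigr) \oplus \bigoplus_{i=1}^{N-1} \bigl( \OmX^k(\log D) \tensor L^{-i} \tensor \Mmod \bigr),
\]
where the summands correspond to the characters of the natural $\mu_N$-action on $\pil \OmY^k$.

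The main step is to verify that this isotypic decomposition is preserved by the de Rham differential $\nabla$, so that one obtains a decomposition of $\pil \DR(\Mmod_Y)$ into a direct sum of filtered subcomplexes. The group $\mu_N$ acts on $Y$ over $X$, and lifts canonically to an action on $\Mmod_Y \simeq \piu\Mmod$: any $\zeta \in \mu_N$ induces an automorphism $\sigma_{\zeta} \colon Y \to Y$ over $X$, and functoriality of the non-characteristic pullback (\theoremref{thm:inverse-image}) gives an isomorphism $\sigma_{\zeta}^{\ast}\piu\Mmod \simeq \piu\Mmod$ of filtered $\Dmod$-modules. In the local description $\piu \Mmod = \OY \tensor_{\pi^{-1}\OX}\pi^{-1}\Mmod$ this action is trivial on the $\pi^{-1}\Mmod$ factor; combined with the obvious equivariance of the exterior derivative on $\OmY^{\bullet}$, the local formula \eqref{eq:DR-local} then shows that $\nabla$ is $\mu_N$-equivariant, as required.

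I would then isolate the weight-one isotypic summand. By the local description in \parref{par:coverings}, where weight $i$ corresponds to multiplication by $t^i$ (and $t^N = f$ is a local equation for $D$), its underlying sheaves are exactly $\OmX^k(\log D) \tensor L^{-1} \tensor \Mmod$, and the induced differentials---coming from the Leibniz rule $\nabla(t \cdot \piu m) = \mathit{dt} \tensor \piu m + t \cdot \piu\nabla m$ together with $\mathit{dt}/t = \tfrac{1}{N}\df/f$---are precisely the de Rham-type differentials of the complex in the statement. The induced filtration places $F_{p+k}\Mmod$ in the $k$-th term, by the filtered projection formula applied to $F_p \Mmod_Y \simeq \piu F_p \Mmod$. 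The only real obstacle is the bookkeeping around the $\mu_N$-equivariance of $\nabla$, which is essentially formal once one notes that \theoremref{thm:inverse-image} makes the identification $\Mmod_Y \simeq \piu \Mmod$ natural under automorphisms of $Y$ covering the identity on $X$.
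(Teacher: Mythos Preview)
Your proof is correct and follows essentially the same route as the paper: use the projection formula to write $\pil \DR(\Mmod_Y)$ in terms of $\pil \OmY^{\bullet} \tensor \Mmod$, then take the $L^{-1}$-summand via the $\mu_N$-action, noting that the differential is equivariant and the filtration is obviously respected. Your justification of the $\mu_N$-equivariance of $\nabla$ via \theoremref{thm:inverse-image} is a bit more explicit than the paper's one-line appeal to the group action, but the argument is the same.
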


\begin{proof}
Note that the functor $\pil$ is exact because $\pi$ is a finite morphism; the
isomorphism $\Mmod_Y \simeq \piu \Mmod$ and the projection formula therefore imply that
\[
	\pil \DR(\Mmod_Y) \simeq \Bigl\lbrack
		\pil \OY \tensor \Mmod \to \pil \OmY^1 \tensor \Mmod \to \dotsb 
			\to \pil \OmY^n \tensor \Mmod
	\Bigr\rbrack \decal{n}.
\]
We now take the summand with $L^{-1}$ in the decomposition of each term (see
\parref{par:coverings}). To show that
this leads to a subcomplex, we can exploit the group action: the group of $N$-th roots of
unity acts on the entire complex, and we are taking the summand corresponding to the
standard character. That the decomposition respects the filtration is obvious. 
\end{proof}

We shall give a second proof in local coordinates in \parref{par:computations} below.
To simplify the notation, let us denote by
\[
	\Kt \subseteq \pil \DR(\Mmod_Y) \quad \text{and} \quad
		F_{\bullet} \Kt \subseteq \pil F_{\bullet} \DR(\Mmod_Y)
\]
the subcomplex in \lemmaref{lem:subcomplex}, together with the induced filtration. As
before, we have a collection of $\CC$-linear connecting morphisms
\[
	\deltat_p \colon \gr_p^F \Kt \to \gr_{p-1}^F \Kt \decal{1}
\]
in the derived category; because $\Kt$ is a direct summand, the degeneration of the
spectral sequence in \eqref{eq:spectral-sequence} means that the induced morphisms
\[
	H^i \bigl( X, \gr_p^F \Kt \bigr)
		 \to H^{i+1} \bigl( X, \gr_{p-1}^F \Kt \bigr)
\]
are also equal to zero. To exploit this fact, we are now going to relate the graded
quotients $\gr_p^F \Kt$ to the two complexes $\gr_p^F \DR(\Mmod)$ and $\gr_p^F
\DR(\Mmod_D)$.

\begin{proposition} \label{prop:morphisms}
Let $L_D$ denote the restriction of $L$ to the divisor $D$. 
\begin{aenumerate}
\item We have a morphism of complexes
\[
	f_p \colon L^{-1} \tensor \gr_p^F \DR(\Mmod) \to \gr_p^F \Kt,
\]
induced by the natural inclusions $\OmX^k \into \OmX^k(\log D)$.
\item We have a morphism of complexes
\[
	r_p \colon \gr_p^F \Kt \to L_D^{-1} \tensor \gr_{p+1}^F \DR(\Mmod_D),
\]
induced by the residue mappings $\Res_D \colon \OmX^k(\log D) \to \OmD^{k-1}$.
\end{aenumerate}
\end{proposition}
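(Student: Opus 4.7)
The plan is to verify both morphisms termwise, reducing compatibility with the differentials to a local computation on the branched covering. Fix a point of $X$, pick local coordinates $x_1, \dotsc, x_n$ with $D = \{x_1 = 0\}$, and trivialize $L$ using the section $s$; the covering $Y$ then carries local coordinates $(t, x_2, \dotsc, x_n)$ with $t^N = x_1$, and the chain rule gives $\partial_t = N t^{N-1} \partial_1$ acting on $\Mmod_Y = \piu \Mmod$. Since $F_{\bullet} \Mmod_Y = \piu F_{\bullet} \Mmod$ by \theoremref{thm:inverse-image}, the terms of $\gr_p^F \Kt$ are $L^{-1} \tensor \OmX^k(\log D) \tensor \gr_{p+k}^F \Mmod$.

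The first and main step is to compute the differential on $\gr_p^F \Kt$ explicitly in these coordinates. Under the identifications from \parref{par:coverings}, in which $t \cdot \dx_I$ corresponds to $\dx_I$ and $\mathit{dt} \wedge \dx_I$ corresponds to $\tfrac{1}{N} \tfrac{\dx_1}{x_1} \wedge \dx_I$, one transports the de Rham differential on the $L^{-1}$-summand of $\pil \DR(\Mmod_Y)$ through the isomorphism of \lemmaref{lem:subcomplex}. The relations $t^N = x_1$ and $\partial_t = N t^{N-1} \partial_1$ collapse the $\partial_t$-contribution into precisely the $i=1$ term of $\sum_i (\dx_i \wedge \beta) \tensor \partial_i m$, while the $\partial_{x_j}$-contributions supply the terms for $i \geq 2$. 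The conclusion is that the differential on $\gr_p^F \Kt$ is given by the same local formula as the one on $\gr_p^F \DR(\Mmod)$, except that now $\beta \in \OmX^k(\log D)$. Part (a) then follows immediately: the inclusion $\OmX^k \into \OmX^k(\log D)$ commutes with $\dx_i \wedge (\argbl)$, so $f_p$ intertwines the two differentials.

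For part (b), write any $\beta \in \OmX^k(\log D)$ uniquely as $\beta = \gamma + \tfrac{\dx_1}{x_1} \wedge \eta$, where $\gamma$ is free of $\tfrac{\dx_1}{x_1}$ and $\eta \in \OmX^{k-1}$ is free of $\dx_1$; then $\Res_D(\beta) = \eta \restr{D}$. A short computation using the anticommutation of $\dx_i$ and $\tfrac{\dx_1}{x_1}$ gives $\Res_D(\dx_i \wedge \beta) = -(\dx_i \wedge \eta) \restr{D}$ for $i \geq 2$ and $\Res_D(\dx_1 \wedge \beta) = 0$. The minus sign here is absorbed by the fact that $\gr_{p+1}^F \DR(\Mmod_D)$ is shifted by $\dim D = n - 1$ rather than $n$, so that both sides of the desired identity $r_p \circ d = d \circ r_p$ produce $(-1)^{n-1} \sum_{j \geq 2} (\dx_j \wedge \eta \restr{D}) \tensor (\partial_j m) \restr{D}$. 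The hard part will be the first step, namely the explicit identification of the differential on $\gr_p^F \Kt$: one has to decompose $\pil \OmY^{\bullet}$ by characters of the group of $N$-th roots of unity and carefully match the basis $\{t \cdot \dx_I,\, \mathit{dt} \wedge \dx_I\}$ on $Y$ against $\{\dx_I,\, \tfrac{\dx_1}{x_1} \wedge \dx_I\}$ on $X$ while applying the chain rule. Once this calculation is in place, both parts of the proposition reduce to sign bookkeeping.
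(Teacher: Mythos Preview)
Your proof is correct and follows essentially the same route as the paper: a local computation in coordinates on the branched covering, identifying the differential on $\gr_p^F \Kt$ with the standard formula $(-1)^n \sum_i (\dx_i \wedge \beta) \tensor \partial_i m$ for $\beta \in \OmX^k(\log D)$, after which both (a) and (b) reduce to the sign bookkeeping you describe. The only organizational difference is that the paper first records the differential on the full filtered complex $\Kt$, namely
\[
	\pil \nabla_Y(\alpha \tensor m) = (-1)^n \frac{1}{N} \frac{\dx_n}{x_n} \wedge \alpha \tensor m + \nabla(\alpha \tensor m),
\]
and then passes to $\gr_p^F$, where the extra logarithmic term drops out; you compute directly on the graded level, which is enough here but means the extra term has to be recovered separately when you later prove \propositionref{prop:composition}.
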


The proof requires a small calculation in local coordinates; we postpone it until
\parref{par:computations} and first state the main result. 

\begin{proposition} \label{prop:composition}
Up to a constant factor of $(-1)^n N$, the composition
\[
\begin{tikzcd}
	L^{-1} \tensor \gr_p^F \DR(\Mmod) \rar{f_p} & \gr_p^F \Kt \rar{\deltat_p} & 
		\gr_{p-1}^F \Kt \decal{1} \rar{r_{p-1}} & 
		L_D^{-1} \tensor \gr_p^F \DR(\Mmod_D) \decal{1}
\end{tikzcd}
\]
is equal to the restriction mapping.
\end{proposition}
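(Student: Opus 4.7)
The plan is to compute the composition representative-by-representative in local coordinates. The assertion is local on $X$, so choose coordinates $x_1, \dotsc, x_n$ on $X$ with $D = \{x_1 = 0\}$, trivialize $L$ locally, and let $\tau$ denote the coordinate on $Y$ with $\tau^N = x_1$. Under the identification from \parref{par:coverings}, a local section $\alpha \in L^{-1} \tensor \OmX^k(\log D)$ corresponds to $\tau \cdot \piu \alpha$ in the weight-$1$ summand of $\pil \OmY^k$; the key identity $\piu(\dx_1/x_1) = N \, d\tau/\tau$ ensures that the log pole on $X$ is cancelled by the $\tau$ and produces a holomorphic form on $Y$.

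Next, I compute $\deltat_p \circ f_p$ on a cocycle representative. Given $\alpha \tensor m$ with $\alpha \in \OmX^k$ (no log poles) representing a class in $L^{-1} \tensor \gr_p^F \DR(\Mmod)$, pick any lift $\tilde m$ of $m$ to $F_\bullet \Mmod / F_{\bullet-2} \Mmod$; then $\alpha \tensor \tilde m$ lifts $f_p(\alpha \tensor m)$ to $F_p \Kt / F_{p-2} \Kt$, and $\deltat_p$ is represented by the class of $\nabla(\alpha \tensor \tilde m)$ in $\gr_{p-1}^F \Kt$. Using the chain rule $\partial_\tau = N \tau^{N-1} \partial_{x_1}$ together with $\dx_1 = N \tau^{N-1} d\tau$, one translates the $Y$-differential back through the identification above and finds
\[
\nabla_{\Kt}(\alpha \tensor \tilde m) \;=\; \nabla_X(\alpha \tensor \tilde m) \;+\; c \cdot (\dx_1 / x_1) \wedge \alpha \tensor \tilde m
\]
for an explicit nonzero constant $c$ depending on $N$; the first summand is the ordinary de Rham differential on $X$, while the second is an extra log contribution arising from the $d\tau \wedge \alpha$ piece inside $d(\tau \cdot \piu \alpha)$.

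Finally, I apply $r_{p-1}$. The cocycle condition on $\alpha \tensor m$ places $\sum_i (\dx_i \wedge \alpha) \tensor \partial_{x_i} \tilde m$ already at the correct $F$-level, so $\nabla_X(\alpha \tensor \tilde m)$ represents a class in $\gr_{p-1}^F \Kt$ whose form part lies in $\OmX^{k+1}$ (no log pole along $D$); the residue $r_{p-1}$ annihilates this contribution. Applying $\Res_D$ to the remaining log term gives $c \cdot \alpha|_D \tensor m|_D$, which is precisely the restriction mapping applied to $\alpha \tensor m$. Combining the sign $(-1)^n$ from the de Rham convention with the factors of $N$ coming from the chain rule, from $\piu(\dx_1/x_1)$, and from the normalization of the identification of the weight-$1$ summand yields the stated constant $(-1)^n N$.

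The main obstacle is the careful bookkeeping of the several factors of $N$ and of the sign $(-1)^n$ that enter at different stages; each individual manipulation is routine, but they must be combined consistently to obtain the precise constant. A secondary, more formal point is to justify that the representative-level computation genuinely represents the connecting morphism $\deltat_p$ associated with the short exact sequence $0 \to \gr_{p-1}^F \Kt \to F_p \Kt / F_{p-2} \Kt \to \gr_p^F \Kt \to 0$ in the derived category; this is the standard description of such connecting morphisms and requires no further work.
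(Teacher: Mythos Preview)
Your local computation is essentially the same as the paper's: identify the differential on $\Kt$ as the ordinary $X$-de Rham differential plus the logarithmic correction $(-1)^n \frac{1}{N}\,\frac{dx_1}{x_1}\wedge\alpha\otimes m$, and then observe that the residue kills the non-logarithmic part while sending the correction to the restriction. However, there is a genuine gap in how you handle $\deltat_p$.

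You compute the connecting morphism by the recipe ``take a cocycle, lift, differentiate, project'', which only determines the induced map on \emph{cohomology sheaves}; it does not pin down $\deltat_p$ as a morphism in the derived category, and two such morphisms can agree on cohomology sheaves without being equal. But the proposition asserts equality in the derived category, and this is exactly what is needed later: one deduces the vanishing of the restriction map on hypercohomology from the vanishing of $\deltat_p$ on hypercohomology, and equality on cohomology sheaves does not transfer to hypercohomology. You dismiss this point as ``standard'' and ``requires no further work'', but it is precisely the content of \lemmaref{lem:connecting}: the paper shows that the \emph{difference} $\deltat_p f_p - f_{p-1}\delta_p$ is represented by the honest chain map $\psi$ given by the discrepancy of differentials, and then uses the chain-level identity $r_{p-1} \circ f_{p-1}=0$ (forms without log poles have zero residue) to obtain $r_{p-1}\deltat_p f_p = r_{p-1}\psi$ in the derived category. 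Your observation that ``$r_{p-1}$ annihilates $\nabla_X(\alpha\otimes\tilde m)$'' is the representative-level shadow of this, but without the lemma it does not prove what is claimed.

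A smaller point: your bookkeeping of the constant is muddled. There is exactly one factor of $1/N$, coming from $d\tau/\tau=\frac{1}{N}\,dx_1/x_1$; the composition is $(-1)^n/N$ times restriction, and ``up to a factor of $(-1)^nN$'' means one multiplies by $(-1)^nN$ to recover the restriction map. Your list of three separate ``factors of $N$'' does not match the actual computation.
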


The proof can be found in \parref{par:computations}. The point is of course
that, because of the above factorization, the induced morphism on cohomology
\begin{equation} \label{eq:restriction}
	H^i \Bigl( X, L^{-1} \tensor \gr_p^F \DR(\Mmod) \Bigr)
	\to H^{i+1} \Bigl( D, L_D^{-1} \tensor \gr_p^F \DR(\Mmod_D) \Bigr)
\end{equation}
is equal to zero. Once this is known, \theoremref{thm:Saito-pure} can be proved very
easily by using Serre's vanishing theorem and induction on the dimension.

\subsection{Proof of Saito's theorem}

We are now ready to prove \theoremref{thm:Saito-pure}. We first observe that the two
assertions 
\begin{align}
	H^i \bigl( Z, \gr_p^F \DR(\Mmod) \tensor L \bigr) = 0 
		\quad \text{for $i > 0$ and $p \in \ZZ$,} \label{eq:assertion-1} \\
	H^i \bigl( Z, \gr_p^F \DR(\Mmod) \tensor L^{-1} \bigr) = 0 
		\quad \text{for $i < 0$ and $p \in \ZZ$,} \label{eq:assertion-2}
\end{align}
are equivalent to each other by virtue of \lemmaref{lem:polarization}; it is
therefore enough to prove the second one. This will be done by induction on the
dimension. Since $D \subseteq X$ is a smooth divisor with $L^N \simeq \OX(D)$, we
have a short exact sequence 
\[
	0 \to L_D^{-N} \to \OmX^1 \restr{D} \to \OmD^1 \to 0.
\]
As shown in \parref{par:computations} below, it induces a short exact sequence of
complexes
\begin{equation} \label{eq:short-exact}
	0 \to L_D^{-N} \tensor \gr_{p+1}^F \DR(\Mmod_D) \to
		\gr_p^F \DR(\Mmod) \restr{D} \to \gr_p^F \DR(\Mmod_D) \decal{1} \to 0.
\end{equation}
By induction, we can assume that the $i$-th cohomology of $L_D^{-N-1} \tensor
\gr_{p+1}^F \DR(\Mmod_D)$ vanishes for every $i < 0$; it follows that
\begin{equation} \label{eq:induction}
	H^i \Bigl( D, L_D^{-1} \tensor \gr_p^F \DR(\Mmod) \restr{D} \Bigr)
	\to H^{i+1} \Bigl( D, L_D^{-1} \tensor \gr_p^F \DR(\Mmod_D) \Bigr)
\end{equation}
is injective. Because we already know that the morphism in \eqref{eq:restriction} is
equal to zero, the injectivity of \eqref{eq:induction} means that the morphism
\[
	H^i \Bigl( X, L^{-1} \tensor \gr_p^F \DR(\Mmod) \Bigr)
	\to H^i \Bigl( D, L_D^{-1} \tensor \gr_p^F \DR(\Mmod_D) \restr{D} \Bigr)
\]
is also equal to zero. This obviously implies the surjectivity of
\[
	H^i \Bigl( X, L^{-N-1} \tensor \gr_p^F \DR(\Mmod) \Bigr) \to
	H^i \Bigl( X, L^{-1} \tensor \gr_p^F \DR(\Mmod) \Bigr)
\]
for $i < 0$; note that the morphism is nothing but multiplication by the 
global section $s \in H^0(X, L^N)$ that we chose at the beginning of the proof.

Now we can easily complete the proof of \theoremref{thm:Saito-pure} with the help of
Serre's vanishing theorem. Recall that $\gr_p^F \DR(\Mmod) \in \Dbcoh(\OZ)$ does not
depend on the choice of embedding; what we have shown above is that the
multiplication morphism
\begin{equation} \label{eq:multiplication}
	H^i \Bigl( Z, L^{-N-1} \tensor \gr_p^F \DR(\Mmod) \Bigr) \to
	H^i \Bigl( Z, L^{-1} \tensor \gr_p^F \DR(\Mmod) \Bigr)
\end{equation}
is surjective for every $i < 0$ and every sufficiently general section $s \in H^0(Z, L^N)$.
By \lemmaref{lem:polarization} and Serre duality, we have
\[
	H^i \Bigl( Z, L^{-N-1} \tensor \gr_p^F \DR(\Mmod) \Bigr)
		\simeq H^{-i} \Bigl( Z, L^{N+1} \tensor \gr_{-p-w}^F \DR(\Mmod) \Bigr).
\]
This becomes equal to zero for $N \gg 0$, because $\gr_{-p-w}^F \DR(\Mmod)$ is
concentrated in non-positive degrees. The surjectivity of \eqref{eq:multiplication}
therefore implies the desired vanishing for $\gr_p^F \DR(\Mmod)$.

\begin{note}
The proof becomes simpler in the case of the lowest graded quotient
\[
	\gr_{p(M)}^F \DR(\Mmod) = \omX \tensor F_{p(M) + n} \Mmod
\]
of the de Rham complex. This amounts to taking $p = -p(M)-w$ in the argument
above; the point is that the complex $\gr_{p+1}^F \DR(\Mmod_D)$ is now exact, because
\[
	p + 1 = 1 - p(M) - w = 1 - p(M_D) - (w - 1).
\]
Consequently, \eqref{eq:induction} is automatically injective, and so we do not need
any vanishing on $D$ to conclude that \eqref{eq:multiplication} is surjective.
Many other interesting results about the coherent $\OZ$-module $\gr_{p(M)}^F
\DR(\Mmod)$ can be found in \cite{Saito-K}.
\end{note}

\subsection{Computations in local coordinates}
\label{par:computations}

We now prove \propositionref{prop:morphisms} and \propositionref{prop:composition},
as well as the exactness of the sequence of complexes in \eqref{eq:short-exact}.
Since it is easiest to do this by a calculation in local coordinates, we shall first
give a description of the complex $\Kt$ in a neighborhood of the divisor $D$.

Let $x_1, \dotsc, x_n$ be local holomorphic coordinates on $X$, with the property
that the divisor $D$ is defined by the equation $x_n = 0$. On $Y$, we can choose
local holomorphic coordinates $y_1, \dotsc, y_n$ in such a way that $\pi \colon Y
\to X$ is represented by
\[
	(x_1, \dotsc, x_{n-1}, x_n) = (y_1, \dotsc, y_{n-1}, y_n^N).
\]
In particular, the line bundle $L$ is trivial on the open set in question; note that
the summand $L^{-1}$ in the decomposition of $\pil \OY$ corresponds to $y_n \cdot \OX$. 

The $\Dmod$-module structure on $\Mmod_Y \simeq \piu \Mmod$ comes from the natural
morphism $\Dmod_Y \to \piu \Dmod_X$. As in \parref{par:DR}, the differentials
$\nabla_Y \colon \OmY^k \tensor \Mmod_Y \to \OmY^{k+1} \tensor \Mmod_Y$ in the de
Rham complex of $\Mmod_Y$ are therefore given in local coordinates by 
\[
	\nabla_Y(\alpha \tensor m) = (-1)^n d\alpha \tensor m + (-1)^n 
		\sum_{i=1}^n (\piu \dx_i \wedge \alpha) \tensor \frac{\partial}{\partial x_i} m.
\]
Since $L$ is trivial on the open set in question, the induced differentials
\[
	\pil \nabla_Y \colon \OmX^k(\log D) \tensor \Mmod 
		\to \OmX^{k+1}(\log D) \tensor \Mmod
\]
are represented by the formula
\begin{equation} \label{eq:new-differential}
\begin{split}
	\pil \nabla_Y(\alpha \tensor m) &= (-1)^n \frac{d(y_n \alpha)}{y_n} \tensor m + (-1)^n 
		\sum_{i=1}^n (\dx_i \wedge \alpha) \tensor \frac{\partial}{\partial x_i} m \\
	&= (-1)^n \frac{1}{N} \frac{\dx_n}{x_n} \wedge \alpha \tensor m 
		+ \nabla(\alpha \tensor m),
\end{split}
\end{equation}
where $\nabla$ is defined as in \eqref{eq:DR-local}. With this description,
\propositionref{prop:morphisms} is easy.

\begin{proof}[Proof of \propositionref{prop:morphisms}]
The formula in \eqref{eq:new-differential} shows that the morphisms
\[
	\OmX^k \tensor L^{-1} \tensor \gr_{p+k}^F \Mmod \to 
		\OmX^k(\log D) \tensor L^{-1} \tensor \gr_{p+k}^F \Mmod
\]
are compatible with the differentials in $L^{-1} \tensor \gr_p^F \DR(\Mmod)$ and
$\gr_p^F \Kt$; this proves the first assertion. Our definition of the residue mapping
is
\[
	\Res_D \colon \OmX^k(\log D) \to \OmD^{k-1}, \quad
		\Res_D \left( \frac{df}{f} \wedge \alpha \right) = \alpha \restr{D}
\]
where $f$ is an arbitrary local defining equation for $D$; it interacts better with
the sign conventions for the de Rham complex than the usual definition. The residue
mapping induces morphisms 
\[
	r_p \colon \OmX^k(\log D) \tensor L^{-1} \tensor \gr_{p+k}^F \Mmod
		\to \OmD^{k-1} \tensor L_D^{-1} \tensor \gr_{p+k}^F \Mmod_D,
\]
and we have to check that they are compatible with the differentials in both
complexes. This is straightforward: the residue of
\[
	(-1)^n \sum_{i=1}^n (\dx_i \wedge \alpha) \tensor \frac{\partial}{\partial x_i} m
\]
is evidently 
\[
	(-1)^n \sum_{i=1}^n \Res_D (\dx_i \wedge \alpha) \tensor 
		\frac{\partial}{\partial x_i} m \restr{D}
	= (-1)^{n-1} \sum_{i=1}^{n-1} \dx_i \wedge \Res_D(\alpha) \tensor
		\frac{\partial}{\partial x_i} m \restr{D},
\]
hence equal to what we get when we apply $\nabla_D$ to $\Res_D(\alpha) \tensor m \restr{D}$.
\end{proof}

Now we can prove the main technical result, namely \propositionref{prop:composition}.

\begin{proof}[Proof of \propositionref{prop:composition}]
It suffices to check this in a neighborhood of any given point of $D$. After choosing 
local coordinates as above, the line bundle $L$ becomes trivial on the open set in
question, and the differentials in the complex $\Kt$ are given by the formula in
\eqref{eq:new-differential}. To simplify the notation, we define
\[
	K = \DR(\Mmod) \quad \text{and} \quad
		F_{\bullet} K = F_{\bullet} \DR(\Mmod),
\]
and denote by $\delta_p \colon \gr_p^F K \to \gr_{p-1}^F K \decal{1}$ the connecting
morphisms in the derived category. Since we have trivialized $L$, the morphisms
$\OmX^k \tensor \Mmod \to \OmX^k(\log D) \tensor \Mmod$ give rise to a
commutative diagram 
\[
\begin{tikzcd}
0 \rar & \gr_{p-1}^F K \rar \dar{f_{p-1}} & F_p K / F_{p-2} K \rar \dar[dashed] & 
		\gr_p^F K \rar \dar{f_p} & 0  \\
0 \rar & \gr_{p-1}^F \Kt \rar & F_p \Kt / F_{p-2} \Kt \rar & \gr_p^F \Kt \rar & 0
\end{tikzcd}
\]
where both rows are exact and the solid arrows are morphisms of complexes (by
\propositionref{prop:morphisms}); the dashed arrow is not a morphism of complexes. In
the derived category, we now consider the following square, which is in general
not commutative:
\[
\begin{tikzcd}
\gr_p^F K \rar{\delta_p} \dar{f_p} & \gr_{p-1}^F K \decal{1} \dar{f_{p-1}} \\
\gr_p^F \Kt \rar{\deltat_p} & \gr_{p-1}^F \Kt \decal{1}
\end{tikzcd}
\]
According to \lemmaref{lem:connecting} below, the difference
\[
	\deltat_p f_p - f_{p-1} \delta_p \colon \gr_p^F K \to \gr_{p-1}^F \Kt \decal{1}
\]
can be computed by comparing the differential in $\Kt$ and the differential in $K$;
going back to \eqref{eq:new-differential}, the result is that $\deltat_p f_p -
f_{p-1} \delta_p$ equals
\[
	\OmX^k \tensor \gr_{p+k}^F \Mmod \to \OmX^{k+1}(\log D) \tensor \gr_{p+k}^F \Mmod,
		\quad 
	\alpha \tensor m \mapsto (-1)^n \frac{1}{N} \frac{\dx_n}{x_n} \wedge \alpha \tensor m.
\]
If we compose this with the residue mapping, we find that the morphism $r_{p-1}
\deltat_p f_p = r_{p-1}(\deltat_p f_p - f_{p-1} \delta_p)$ is equal to
\[
	\OmX^k \tensor \gr_{p+k}^F \Mmod \to \OmD^k \tensor \gr_{p+k}^F \Mmod_D,
		\quad 
	\alpha \tensor m \mapsto (-1)^n \frac{1}{N} \alpha \restr{D} \tensor m \restr{D},
\]
and therefore agrees with the restriction mapping up to a factor of $(-1)^n N$. 
\end{proof}

It remains to say a few words about the sequence of complexes in
\eqref{eq:short-exact}. Starting from the short exact sequence of locally free
sheaves
\[
	0 \to L_D^{-N} \to \OmX^1 \restr{D} \to \OmD^1 \to 0,
\]
we can take exterior powers to obtain a family of short exact sequences
\[
	0 \to L_D^{-N} \tensor \OmD^{k-1} \to \OmX^k \restr{D} \to \OmD^k \to 0
\]
for $k = 0, 1, \dotsc, n$. Since $\OmD^k$ is locally free, the resulting sequences
\[
	0 \to L_D^{-N} \tensor \OmD^{k-1} \tensor \gr_{p+k}^F \Mmod_D
		\to \bigl( \OmX^k \tensor \gr_{p+k}^F \Mmod \bigr) \restr{D} 
		\to \OmD^k \tensor \gr_{p+k}^F \Mmod_D \to 0
\]
are still short exact. Using the formulas in \parref{par:DR}, it is an easy exercise to
show that both morphisms are compatible with the differentials.

\subsection{Connecting morphisms}

In this section, we prove a small lemma about connecting morphisms in short exact
sequences of complexes. Let $B_1$ and $B_2$ be two complexes in an abelian category,
and suppose that we have a family of morphisms
\[
	f^n \colon B_1^n \to B_2^n
\]
that do not necessarily commute with the differentials; this is of course precisely
the situation that we encountered during the proof of \propositionref{prop:composition}. 
If we define
\[
	\varphi^n = f^{n+1} d_1^n - d_2^n  f^n 
		\colon B_1^n \to B_2^{n+1},
\]
then $\varphi \colon B_1 \to B_2 \decal{1}$ is a morphism of complexes; here it is
necessary to remember that the $n$-th differential in the shifted complex $B_2
\decal{1}$ is equal to $-d_2^{n+1}$. Suppose in addition that we have the
following commutative diagram: 
\[
\begin{tikzcd}
0 \rar & A_1 \rar{i_1} \dar{e} & B_1 \rar{p_1} \dar[dashed]{f} & C_1 \rar \dar{g} & 0  \\
0 \rar & A_2 \rar{i_2} & B_2 \rar{p_2} & C_2 \rar & 0 
\end{tikzcd}
\]
In this diagram, all solid arrows are morphisms of complexes; both squares commute;
and both rows are exact. Because $e$ and $g$ commute with the differentials, it is
easy to see that $\varphi = i_2  \psi  p_1$ for a unique morphism of complexes $\psi
\colon C_1 \to A_2 \decal{1}$. Note that although $\varphi$ is homotopy equivalent to
zero, this is no longer the case for $\psi$; in particular, viewed as a morphism in
the derived category, $\psi$ is typically nonzero.

In the derived category, each row of the diagram is part of a distinguished triangle,
and we denote the third morphism in this triangle by $\delta_k \colon C_k \to A_k
\decal{1}$. We can now consider the following square of morphisms:
\[
\begin{tikzcd}
C_1 \rar{\delta_1} \dar{g} & A_1 \decal{1} \dar{e} \\
C_2 \rar{\delta_2} & A_2 \decal{1}
\end{tikzcd}
\]
Unless $f$ is a morphism of complexes, the square is not commutative. The following
lemma shows that 

\begin{lemma} \label{lem:connecting}
In the derived category, we have $\delta_2  g - e  \delta_1 = \psi$.
\end{lemma}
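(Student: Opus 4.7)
The plan is to realize both connecting morphisms via the mapping cone construction and then lift $g$ to a genuine morphism of complexes between the mapping cones, with the correction term coming precisely from $\psi$. For each $k \in \{1,2\}$, the mapping cone $\Cone(i_k)$ sits in a roof
\[
    C_k \xleftarrow{\,q_k\,} \Cone(i_k) \xrightarrow{\,\pi_k\,} A_k \decal{1},
\]
where $q_k$ is the natural quasi-isomorphism induced by $p_k$ and $\pi_k$ is the projection onto $A_k \decal{1}$; this roof represents $\delta_k$ in the derived category.

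The key step is to produce a lift $\tilde{g} \colon \Cone(i_1) \to \Cone(i_2)$ of $g$ that actually commutes with the differentials. The naive block-diagonal candidate built from $e$ and $f$ fails to do so because $f$ is not a morphism of complexes; its failure is measured precisely by $\varphi = i_2 \psi p_1$. I would add an off-diagonal correction and set
\[
    \tilde{g}(a,b) = \bigl( e a + \psi p_1(b),\, f(b) \bigr).
\]
A direct calculation then shows that the correction term cancels the contribution of $\varphi$ to $d\tilde{g} - \tilde{g}\, d$, using that $\psi$ itself commutes with differentials (once one remembers the sign coming from the shift). The right commutative square $p_2 f = g p_1$ gives immediately $q_2 \tilde{g} = g q_1$, so $\tilde{g}$ lifts $g$ compatibly with the roofs.

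Once $\tilde{g}$ is available, the roof $C_1 \xleftarrow{q_1} \Cone(i_1) \xrightarrow{\pi_2 \tilde{g}} A_2 \decal{1}$ represents $\delta_2 g$, while $C_1 \xleftarrow{q_1} \Cone(i_1) \xrightarrow{e \pi_1} A_2 \decal{1}$ represents $e \delta_1$. Hence $\delta_2 g - e \delta_1$ is represented by $(\pi_2 \tilde{g} - e \pi_1) q_1^{-1}$, and the explicit formula gives $\pi_2 \tilde{g}(a,b) - e \pi_1(a,b) = \psi p_1(b) = \psi \, q_1(a,b)$, from which $\delta_2 g - e \delta_1 = \psi$ follows. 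The main obstacle is a bookkeeping one, namely fixing a consistent sign convention for the mapping cone and the shift so that the cancellation in the verification that $\tilde{g}$ is a morphism of complexes comes out correctly; as a sanity check, one can instead chase a cocycle $c_1 \in C_1$ through the snake lemma, lift to $b_1$ with $d b_1 = i_1 a_1$, and observe that $d(f b_1) = f \, d b_1 - \varphi(b_1) = i_2(e a_1 - \psi c_1)$, which yields the same identity on cohomology.
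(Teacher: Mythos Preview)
Your argument is correct and is essentially the same as the paper's: both construct an explicit chain map between the mapping cones of $i_1$ and $i_2$ by using $e$ and $f$ on the diagonal and the correction $\psi p_1$ off the diagonal, then read off $\delta_2 g - e\delta_1 = \psi$ from the two projections. The only differences are notational (you order the cone as $A\decal{1}\oplus B$ rather than $B\oplus A\decal{1}$ and call the projections $q_k,\pi_k$ instead of $p_k,q_k$), and you append a snake-lemma cocycle chase as a heuristic check; as you yourself note, the only thing to be careful about is fixing the sign convention for the cone so that the chase and the cone computation agree.
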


\begin{proof}
We begin by describing the morphism $\delta_k$. Let 
\[
	M_k = B_k \oplus A_k \decal{1}
\]
denote the mapping cone of $i_k \colon A_k \to B_k$, with differential given by the
matrix
\[
	\begin{pmatrix}
		d_k & i_k \\
		0 & -d_k
	\end{pmatrix}.
\]
There are two obvious morphisms $p_k \colon M_k \to C_k$ and $q_k \colon M_k \to A_k
\decal{1}$; the first one is a quasi-isomorphism, and $\delta_k p_k =
q_k$. Next, we observe that 
\[
	\begin{pmatrix}
		d_2 & i_2 \\
		0 & -d_2
	\end{pmatrix} \begin{pmatrix}
		f & 0 \\
		\psi p_1 & e
	\end{pmatrix} = \begin{pmatrix}
		f & 0 \\
		\psi p_1 & e
	\end{pmatrix} \begin{pmatrix}
		d_1 & i_1 \\
		0 & -d_1
	\end{pmatrix},
\]
which means exactly that 
\[
	h = \begin{pmatrix}
		f & 0 \\
		\psi p_1 & e
	\end{pmatrix} \colon M_1 \to M_2
\]
is a morphism of complexes with $p_2 h = g p_1$ and $q_2 h = \psi p_1 + e q_1$. But then
\[
	(\delta_2 g - e \delta_1) p_1 = \delta_2 p_2 h - e q_1 = q_2 h - e q_1
		= \psi p_1,
\]
which proves the assertion because $p_1$ is a quasi-isomorphism.
\end{proof}


\bibliographystyle{amsalphax}
\providecommand{\bysame}{\leavevmode\hbox to3em{\hrulefill}\thinspace}
\providecommand{\ZM}{\relax\ifhmode\unskip\space\fi Zbl }
\providecommand{\MR}{\relax\ifhmode\unskip\space\fi MR }
\providecommand{\arXiv}[1]{\relax\ifhmode\unskip\space\fi\href{http://arxiv.org/abs/#1}{arXiv:#1}}
\providecommand{\MRhref}[2]{%
  \href{http://www.ams.org/mathscinet-getitem?mr=#1}{#2}
}
\providecommand{\href}[2]{#2}

\end{document}